\newtheorem{theorem}{Theorem}
\newtheorem{proposition}{Proposition}
\newtheorem{definition}{Definition}
\newtheorem{corollary}{Corollary}
\newcommand{\C}{\mathcal{C}}
\newcommand{\G}{\mathcal{G}}
\renewcommand{\P}{\mathcal{P}}
\renewcommand{\S}{\mathcal{S}}
\newcommand{\U}{\mathcal{U}}
\newcommand{\tc}{{\tilde{c}}}
\newcommand{\ts}{{\tilde{s}}}
\newcommand{\tx}{{\tilde{x}}}
\newcommand{\tu}{{\tilde{u}}}
\newcommand{\au}{{\overleftarrow{u}}}
\newcommand{\atu}{{\overleftarrow{\tilde{u}}}}
\newcommand{\aw}{{\overleftarrow{w}}}
\newcommand{\tz}{{\tilde{z}}}
\newcommand{\tW}{{\tilde{W}}}
\newcommand{\rlambda}{{\bar{\lambda}}}
\newcommand{\rmu}{{\bar{\mu}}}
\newcommand{\sfZ}{{\mathsf{Z}}}
\newcommand{\sfs}{{\mathsf{\mathbf{s}}}}
\begin{document}

\title{Directed FCFS Infinite Bipartite Matching}


\author{Gideon Weiss \thanks{Research supported in part by
Israel Science Foundation Grant 286/13. Part of this work was done while the author was visiting the Simons Institute for the Theory of Computing.
}    
}



\date{}

\maketitle

\begin{abstract}
We consider an infinite sequence consisting  of agents of several types and  goods of several types, with a bipartite compatibility graph between agent and good types.  Goods are matched with agents that appear earlier in the sequence using FCFS matching, if such are available, and  are lost otherwise.   This model may be used for two sided queueing applications such as ride sharing, web purchases, organ transplants, and for  abandon at completion redundancy  parallel service queues.  For this model we calculate matching rates and delays.  These can be used to obtain waiting times and help with design questions for related service systems.
We explore some relations of this model to other FCFS stochastic matching models.

Keywords:  Dynamic online matching ; FCFS matching ; matching rates ;  matching delays ;  two sided queues ;  parallel service systems.
\end{abstract}

\begin{quotation}
\noindent
\begin{compactitem}[ ]
\item
  \ref{sec.introduction}. Introduction
  \item
  \ref{sec.previous}, The Model and Previous Results
  \item
  \ref{sec.detailed}. Detailed Markov Chains with Multi-Bernoulli Stationary Distributions
   \item
  \ref{sec.constant}. A Concise Markov Chain, the Normalizing Constant, and a Sanity Check
   \item
  \ref{sec.matchingrates}. Calculation of the Matching Rates
   \item
  \ref{sec.delays}. The Distribution and the Moments of  Delays
   \item
  \ref{sec.limits}. Light and Heavy Traffic, Relation to Symmetric FCFS Matching
   \item
  \ref{sec.example}. A Simple Illustrative Example
   \end{compactitem}
 \end{quotation}

\section{Introduction}
\label{sec.introduction}

Classical queueing models have random streams of customer arrivals and  fixed sets of servers.  Recently however,  more and more applications are better modeled by  two sided queues, where the fixed sets of servers are replaced by  streams of arriving servers.  The simplest example of such a system is the taxi stand, where cabs and passengers arrive at random times, and depart at the moment that they meet.  More general systems where two sided models seem appropriate include ride sharing systems, organ transplants,  web postings and searches, job markets, trading and auctions.   These models share a more symmetric role of the so called customers and servers, and in particular, in these models service durations are replaced by inter-arrival times.  In the simplest taxi stand model, the symmetric version where both cabs and passengers can wait, is always either transient or null-recurrent, and   to be useful the model needs to include finite waiting rooms or abandonments. In particular one may assume that cabs always wait, while passengers abandon and are lost if they do not find a cab at the stand.   This seems also to be  the model appropriate for ride sharing systems like Uber and Lyft,  for Transplants of perishable organs or blood transfusions, and for internet purchasing.   Our results in this paper are relevant for the following two sided parallel service model:  Agents of several types arrive to the system and wait for goods, and goods of several types arrive and match and depart with a compatible agent, or, if no match is immediately available for an arriving good, it is lost.  Compatibility is determined by a bipartite graph.  The system is controlled by a matching policy -- which of the available compatible agents is assigned the arriving good.  In this paper we focus on the first come first served (FCFS) policy:  The arriving good is assigned to the longest waiting compatible agent.  We do not however study this system directly, instead we consider the following discrete time matching model.

We consider a matching system in which agents of several types, indexed by $c_i\in \C =\{c_1,\ldots,c_I\}$  are matched with several types of goods, indexed by $s_j\in \S=\{s_1,\ldots,s_J\}$,   subject to a bipartite compatibility graph  $\mathcal{G}$, such that $(s_j,c_i)\in\mathcal{G}$ if agent type $c_i$ is compatible  with good type $s_j$.  We assume an infinite sequence of agents and goods, which is chosen i.i.d. from $\C \cup \S$.
Matching is done on a first come first served basis (FCFS), where goods are only matched to agents that appear earlier in the sequence, and goods that find no compatible match are lost. 
We call this a directed matching model, to distinguish it from a model where goods are not lost, and can be matched to agents that appear later in the sequence.
 This model is an exact discrete time analog of the two sided parallel service system if arrivals of agents and goods form independent Poisson processes.  Moreover, this discrete model has the advantage that it  captures the essentials of more general systems, where arrivals are  any generally distributed and time dependent processes, as long as types of successive arrivals are i.i.d.  Data for this more abstract model is very compact, it consists of the compatibility graph and the frequencies of the types.

This model was introduced in \cite{adan-kleiner-righter-weiss:18} where the following results were derived:
\begin{compactitem}[-]
\item
The matching is stable if and only if there are enough compatible goods for every subset of agent types (see Definition \ref{def.stabilitycond}).
\item
Under stability there is almost surely a unique matching for the bi-infinite sequence of agents and goods (see Uniqueness  Theorem \ref{thm.uniqueness}).
\item
Exchanging the positions of each matched pair in the bi-infinite sequence results in an i.i.d sequence, in which the same matches are now directed FCFS in reversed time (see Reversibility Theorem \ref{thm.reversibility}).
\item
The stationary distribution of unmatched agents at every stage along the sequence is derived up to a normalizing constant (see Product Form Theorem \ref{thm.stability}).
\end{compactitem}
Several questions were left unanswered in \cite{adan-kleiner-righter-weiss:18} and motivate the current paper, our main results are:
\begin{compactitem}[-]
\item
We find a detailed Markov description of the matching process, obtain its stationary distribution which is of a very simple intuitive form, and find a simple characterization of the possible states (see Section \ref{sec.detailed})
\item 
We calculate the normalizing constant which was not obtained in  \cite{adan-kleiner-righter-weiss:18} (see Section \ref{sec.constant}).
\item
We calculate the matching rates i.e. long range frequencies of $(s_j,c_i)\in \G$ matches, and the fraction of unmatched goods of each type (see Section \ref{sec.matchingrates})
\item
We obtain the distributions of delays, i.e. the distances between matched agent and good pairs, and derive explicit expressions for their generating functions and first and second moments (see Section \ref{sec.delays}).
\item
We show that in heavy traffic, the directed bipartite matching model converges in a certain sense to the symmetric bipartite matching model of 
 \cite{caldentey-kaplan-weiss:09,adan-weiss:12,adan-busic-mairesse-weiss:17,moyal-busic-mairesse:17} 
 (see Section \ref{sec.limits}).
\end{compactitem}

Before outlining our results we discuss some related literature:    The idea of studying parallel service systems by looking at FCFS matching of an infinite discrete sequence of customers (agents) and an infinite  discrete sequence of servers (goods) was first suggested in \cite{caldentey-kaplan-weiss:09},  motivated by a public housing allocation application \cite{kaplan:88}.  
In this bipartite matching formulation customers and servers play completely symmetric roles.  Several   examples,  with simple  compatibility graphs were analyzed in this paper.  A necessary and sufficient condition for stability and a product form stationary distribution  for the symmetric bipartite matching  model were derived in \cite{adan-weiss:12}.  The product form stationary distribution was derived by partial balance, similar to   
 \cite{visschers-adan-weiss:12,adan-weiss:14}.  Most important, the stationary distribution was then  used to derive expressions for the matching rates, i.e. the fraction of customers of each type served by each type of server (see Definition \ref{def.matchingrates} Section \ref{sec.matchingrates}).  These were considered quite intractable and are essential in studying the performance of FCFS parallel service systems.  
A  deeper analysis of the  symmetric bipartite matching  model  followed in  \cite{adan-busic-mairesse-weiss:17}, where three theoretical results were derived:  A uniqueness result analogous to Theorem \ref{thm.uniqueness}, a reversibility result analogous to Theorem \ref{thm.reversibility}, and detailed Markov descriptions of the matching process, with simple multi-Bernoulli stationary distributions, similar to Theorem \ref{thm.detailed} derived in this paper. In addition to the matching rates, another important performance measure was derived,  the distributions of link lags, i.e. the lags between matched customer server pairs (compare to Section \ref{sec.delays} here).

In a recent paper by Moyal, Busic and Mairesse \cite{moyal-busic-mairesse:17}, a matching model for a general undirected graph is formulated and analyzed:  there is a single sequence of items of several types, and a general compatibility graph,  and each item in the sequence is matched to an earlier compatible item if such exists, or it remains unmatched until it can be matched to a later item in the sequence.  For i.i.d. types, under FCFS matching they derive a condition for stability, they prove a uniqueness result and a reversibility result analogous to Theorems \ref{thm.uniqueness} and \ref{thm.reversibility}, and derive stationary product form distributions, up to a normalizing constant.  In the current version of the paper there is no calculation of the normalizing constant, or of the matching rates.   
Moyal et al. \cite{moyal-busic-mairesse:17} also note that if the compatibility graph is a bipartite graph, then the condition for stability is always violated.  We explain this in Section \ref{sec.limits}.

 Several papers that consider parallel service systems with  customers and servers of several types and bipartite compatibility graph are closely related to matching models.  
The stationary distribution of the parallel service system with Poisson arrivals and exponential service times, derived in \cite{adan-weiss:14} converges to the bipartite matching model of 
\cite{adan-weiss:12} is heavy traffic.  In \cite{gardner-etal:16} Gardner et. al. introduce another kind of FCFS policy, called a redundancy policy:  each server maintains its own FCFS queue, and customers make independent copies of their job, and join the queues of all the compatible servers, so that they may be served simultaneously by more than one server.  The customer then departs when his service at the first of the compatible servers is complete.  This is also called cancel-on-complete in \cite{ayesta-bodas-verloop:18}, to distinguish it from the model of \cite{adan-weiss:14} which is referred to as cancel-on-start.
  If  service times are exponential, then simultaneous service until first completion incurs no loss, and with Poisson arrivals the distribution of the queue lengths is shown to be of product form.  Kleiner et al. \cite{adan-kleiner-righter-weiss:18} compare the policies of \cite{adan-weiss:14} and \cite{gardner-etal:16} and discuss their relations to our FCFS directed bipartite matching model.  Surprisingly Kleiner \cite{adan-kleiner-righter-weiss:18} finds that the stationary Markov chain that describes the redundancy service system is the same as the one describing the unmatched agents in our model.  Further research on this type of model is surveyed in \cite{gardner-righter:19}.
  
In \cite{adan-boon-weiss:19} a heuristic based on calculation of matching rates for the symmetric FCFS bipartite matching model is used in the design of parallel service systems under many server scaling.  A similar design heuristic based on the calculations of matching rates for the directed FCFS matching model 
may be useful for design of two sided systems such as ride sharing etc.

We now briefly describe the results in the following sections.  In Section \ref{sec.previous} we define the directed FCFS bipartite matching model and summarize the results of \cite{adan-kleiner-righter-weiss:18}, which form the basis for our analysis in this paper.   We name this model a directed FCFS matching model to distinguish it from the symmetric FCFS matching model of \cite{caldentey-kaplan-weiss:09,adan-weiss:12,adan-busic-mairesse-weiss:17}:  in the symmetric model goods can be matched both to earlier and to later agents, while in the current model goods do not wait, and can only be matched to agents in the past direction.

In Section \ref{sec.detailed} we  consider the following detailed Markov chain that describes the evolution of the directed FCFS matching:  We perform matching of all goods up to time $N$ (i.e. position $N$ in the sequence),  and with each match we exchange the positions of the matched agent and good.  We then define the state as the list of items in the sequence, starting with the first unmatched agent, and ending at the $N$th item.  The list includes unmatched agents, unmatched and lost gods, and matched and exchanged agents and goods.  We show in Theorem \ref{thm.detailed} that the probability of such a list is simply the product of the frequencies of the items, multiplied by a normalizing constant.  This simplicity is deceptive, since it hides dependence on the constraints of what states are actually 
 admissible (i.e. states that occur with positive probability).  We next obtain a simple characterization of admissible states in Theorem \ref{thm.characterization}.  The results in this section are analogous to similar results in \cite{adan-busic-mairesse-weiss:17} and \cite{moyal-busic-mairesse:17}.

In Section \ref{sec.constant}  we consider a more concise Markov chain associated our model.  We again consider making all the matches up to a discrete time $N$,  and  
list the agents that remain unmatched.  The list of these unmatched agents up to $N$ is a Markov chain and its stationary product form distribution was given in  \cite{adan-kleiner-righter-weiss:18}, following the derivation in \cite{gardner-etal:16}.   A more concise Markov chain includes just the identities of which types  appear in the list of unmatched agents, in their order of appearance in the list, and the count of how many additional unmatched agents are between them.  This is also a Markov chain, and we derive its stationary distribution in Theorem \ref{thm.succint}.  We use this more tractable Markov chain to calculate the normalizing constant, which is also the normalizing constant for the detailed process and the process of  \cite{adan-kleiner-righter-weiss:18}.

In Section \ref{sec.matchingrates}  we derive expressions for the matching rates (Definition \ref{def.matchingrates}), i.e. the long range fraction of matches for each pair $s_j,c_i\in \G$, and the fraction of goods of each type that remain unmatched.   It is conjectured that the calculation of the matching rates (as well as the normalizing constant) is $\sharp P$.  Nevertheless, the calculation is straight forward, and we list an algorithm to perform it.  

In Section \ref{sec.delays}  we derive the distribution of delays, i.e. for each $s_j,c_i\in \G$, the distance in the sequence between the agent and the matched good.  We find that these distributions are a mixture of convolutions of independent geometric random variables.  We obtain explicit expressions for the generating functions and for the mean and variance of these delays. We list an enhancement to the algorithm of Section \ref{sec.matchingrates} to calculate these means and variances.  These can be used to obtain waiting times in the corresponding queueing models.

In Section \ref{sec.limits} we show that in heavy traffic the directed FCFS bipartite matching model converges to the symmetric matching model of  \cite{caldentey-kaplan-weiss:09,adan-weiss:12,adan-busic-mairesse-weiss:17}.  We also obtain the limiting matching rates in light traffic.

In Section \ref{sec.example} we illustrate our results for a simple example.  We demonstrate the advantage of pooling service by bipartite matching over discriminating service.


\section{The Model and Previous Results}
\label{sec.previous}

We consider a single infinite sequence of agents  and goods, which is generated as follows:  each successive item in the list is an agent with probability $\frac{\rlambda}{\rlambda + \rmu}$, or a good with probability $\frac{\rmu}{\rlambda + \rmu}$.  If it is an agent its type is $c_i$ with probability $\alpha_{c_i}$, and if it is a good, its type is $s_j$ with probability $\beta_{s_j}$, so that in summary, the elements of the sequence are i.i.d. $(z^n)_{n=\ldots,-2,-1,0,1,2,\ldots}$, and 
\begin{equation}
\label{eqn.iid}
P(z^n=c_i)= \frac{\rlambda}{\rlambda + \rmu}\alpha_{c_i}=\frac{\lambda_{c_i}}{\rlambda + \rmu},\quad P(z^n=s_j)=\frac{\rmu}{\rlambda + \rmu}\beta_{s_j}= \frac{\mu_{s_j}}{\rlambda + \rmu},   \qquad c_i\in \C,\; s_j\in \S,\;n\in \mathbb{Z}.
\end{equation}
where $\lambda_{c_i},\,\mu_{s_j}$ are as defined above,  and $\G$ is the bipartite compatibility graph.
We define the matching as follows:
\begin{definition}[Directed FCFS matching]
We say that agents and goods are matched according to directed FCFS matching if the following holds:  $z^n=s_j$ is matched with $z^m=c_i$ if and only if:
\begin{compactenum}[(i) ]
\item
$n>m$, 
\item
$(s_j,c_i)\in\G$, 
\item
For all  $k<m$ such that $z^k=c_{i'}$ and $(z_j,c_{i'})\in\G$, there exists  $z^l$ with $l<n$, such that $z^l$ is matched with $z^k$.
\end{compactenum}
If $z^n=s_j$ has no match that satisfies these conditions then $z^n$ remains unmatched.
\end{definition}

This matching model was introduced in \cite{adan-kleiner-righter-weiss:18}, and the following definitions and three theorems are quoted from that paper.

\paragraph{Notation}
We let $\S(c_i)$ denote the subset of good types compatible with $c_i$, and $\C(s_j)$ denote 
the subset of agent types compatible with $s_j$.   For  $C \subset \C$, $S\subset \S$ we let $\S(C) =\bigcup_{c_i\in C} \S(c_i)$, $\C(S) =\bigcup_{s_j\in S} \C(s_j)$,  and denote by $\U(S) = \overline{\C(\overline{S})}$ those agent types that are compatible only with good types in $S$.  We also let $\alpha_C=\sum_{c_i\in C} \alpha_{c_i}$, $\beta_S=\sum_{s_j\in S} \beta_{s_j}$ and $\lambda_C=\sum_{c_i\in C} \lambda_{c_i}$, $\mu_S=\sum_{s_j\in S} \mu_{s_j}$.

 \begin{definition}[Stability condition]
 \label{def.stabilitycond}
We say that the stability condition holds if for all $C \subseteq \C$, $S\subseteq \S$ the following holds:
\begin{equation}
\label{eqn.stability}
\lambda_C < \mu_{\S(C)}
\end{equation}
\end{definition}

The following Markov chain was defined in \cite{adan-kleiner-righter-weiss:18}:  Assume all the possible directed FCFS matches of goods up to $z^N$ have been made.  Define $X(N)$ as the ordered sequence $c^1,\ldots,c^L$, such that 
$c^l = z^{n_l}$, $n_1<\cdots<n_L \le N$ are the only unmatched agents remaining, ordered as in the sequence.  

The states of this Markov chain are described by finite sequences of agent types.  
This is a countable state Markov chain (the set of finite sequences of agent types is a countable set, even if the set of types is infinite).  Its transition probabilities are:
\begin{eqnarray}
\label{eqn.transitionsX}
P(X(N+1)=c^1,\ldots,c^L | X(N)=c^1,\ldots,c^L) &=& 
 \frac{\mu_{\overline{\S}(\{c^1,\ldots,c^L\})}}{\rlambda + \rmu}, \nonumber \\
P(X(N+1)=c^1,\ldots,c^L,c^{L+1} | X(N)=c^1,\ldots,c^L) &=&
\frac{\lambda_{c^{L+1}}}{\rlambda + \rmu}, \\
P(X(N+1)=c^1,\ldots,c^{i-1},c^{i+1},\ldots,c^L | X(N)=c^1,\ldots,c^L) &=&
\frac{\mu_{\overline{\S}(\{c^1,\ldots,c^{i-1}\})\cap \S(c^i)}}{\rlambda + \rmu},  \nonumber
\end{eqnarray}

It turns out that this Markov chain is the jump chain of the redundancy service model studied by \cite{gardner-etal:16}, where the following is proved.
\begin{theorem}[Product Form Theorem]
\label{thm.stability}
The Markov chain $X(N)$ is ergodic if and only if the stability condition (\ref{eqn.stability}) holds.  Its stationary distribution is given, up to a normalizing constant, by:
\begin{equation}
\label{eqn.stationaryX}
\pi_X(c^1,\ldots,c^L) \propto 
\prod_{\ell=1}^L \frac{\lambda_{c^\ell}}{\mu_{\S(\{c^1,\ldots,c^\ell\})}}.
\end{equation}
\end{theorem}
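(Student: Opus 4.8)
The plan is to exhibit $\pi_X$ as an invariant measure of the chain $X(N)$ and then to characterize when it can be normalized to a probability distribution. The chain is irreducible on the finite agent-type sequences: from any state one reaches the empty state by repeatedly removing the head $c^1$ (a good in $\S(c^1)$ arrives, of positive probability whenever $\mu_{\S(c^1)}>0$), and from the empty state one reaches any state by successive appends. Hence, once we know $\pi_X$ is an invariant measure, standard theory for countable irreducible chains gives that $X(N)$ is positive recurrent if and only if $\pi_X$ is summable. So two things must be shown: (i) that $\pi_X$ in (\ref{eqn.stationaryX}) solves global balance, and (ii) that $\sum_\xi \pi_X(\xi)<\infty$ precisely under the stability condition (\ref{eqn.stability}).

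For (i), write $\xi=(c^1,\ldots,c^L)$ and set $A_\ell=\S(\{c^1,\ldots,c^\ell\})$, $A_0=\emptyset$. First I would cancel the self-loop and read off the three transition types in (\ref{eqn.transitionsX}). The only inbound agent move into $\xi$ is the append of $c^L$ from $\xi^-=(c^1,\ldots,c^{L-1})$, and the defining product gives $\pi_X(\xi^-)\lambda_{c^L}=\pi_X(\xi)\mu_{A_L}$, which exactly cancels the outbound good-departure mass $\pi_X(\xi)\mu_{A_L}$. The balance equation then collapses to the clean identity
\[
\rlambda\,\pi_X(\xi)=\sum_{j=1}^{L+1}\sum_{b\in\C}\pi_X(\xi^{(j,b)})\,\mu_{\S(b)\setminus A_{j-1}},
\]
where $\xi^{(j,b)}$ denotes $\xi$ with an agent of type $b$ inserted just before position $j$ (these are precisely the states from which one good arrival produces $\xi$ by removing $b$). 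A direct computation from (\ref{eqn.stationaryX}) gives the insertion ratio
\[
\frac{\pi_X(\xi^{(j,b)})}{\pi_X(\xi)}=\frac{\lambda_b}{\mu_{A_{j-1}\cup\S(b)}}\prod_{k=j}^{L}\frac{\mu_{A_k}}{\mu_{A_k\cup\S(b)}}.
\]

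The crux is then a telescoping identity. Setting $t_k=\mu_{A_k}/\mu_{A_k\cup\S(b)}$ and using $\mu_{\S(b)\setminus A_{j-1}}=\mu_{A_{j-1}\cup\S(b)}-\mu_{A_{j-1}}$, the $b$-th inner sum reduces to $\sum_{m=0}^{L}(1-t_m)\prod_{k=m+1}^{L}t_k=1-\prod_{k=0}^{L}t_k$; since $A_0=\emptyset$ forces $t_0=0$, this equals $1$. Summing $\lambda_b$ over $b\in\C$ then returns $\rlambda$, so the displayed identity holds (the case $L=0$ is the same computation with empty products), establishing invariance of $\pi_X$.

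For (ii) I would treat the two directions separately. For necessity, if (\ref{eqn.stability}) fails for some $C$, track $N_C(N)$, the number of unmatched agents of types in $C$ present in $X(N)$: such agents are created with probability $\lambda_C/(\rlambda+\rmu)$ per step and can be removed only by goods in $\S(C)$, hence with probability at most $\mu_{\S(C)}/(\rlambda+\rmu)$, so when $\lambda_C\ge\mu_{\S(C)}$ the process $N_C$ stochastically dominates a random walk with nonnegative drift and the chain cannot be positive recurrent. Sufficiency, namely that stability forces $\sum_\xi\pi_X(\xi)<\infty$, is the main obstacle, because the summand is not a product over independent coordinates: $\mu_{A_\ell}$ depends on the cumulative union $A_\ell$. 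I would handle it by grouping sequences according to the nondecreasing chain of distinct sets $A_1\subseteq A_2\subseteq\cdots$ together with the multiplicities accrued between successive jumps, which is exactly the bookkeeping made systematic by the concise Markov chain of Section \ref{sec.constant} (Theorem \ref{thm.succint}); its normalizing constant is finite precisely when $\lambda_C<\mu_{\S(C)}$ for all $C$, reducing the convergence of $\sum_\xi\pi_X(\xi)$ to the geometric summability of each block.
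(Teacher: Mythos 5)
Your proof is correct, but note that the paper does not actually prove this theorem: it is quoted verbatim from \cite{adan-kleiner-righter-weiss:18}, which in turn obtains it from the partial-balance argument of \cite{gardner-etal:16} for the associated redundancy service system; the paper's own contribution in this direction is the alternative derivation of $\pi_X$ by marginalizing the multi-Bernoulli stationary law of the detailed chain $U(N)$ (Subsection \ref{sec.sanity}). Your route is a self-contained direct verification of global balance: after cancelling the append transition against the good-departure mass $\pi_X(\xi)\mu_{A_L}$, the remaining identity reduces, via your insertion ratio and the telescoping sum $\sum_{m=0}^{L}(1-t_m)\prod_{k=m+1}^{L}t_k=1-\prod_{k=0}^{L}t_k$ with $t_0=0$, to $\sum_{b}\lambda_b=\rlambda$; I checked this, and it is right, including the degenerate case where several insertion positions $j$ yield the same state (the redundant terms carry weight $\mu_{\S(b)\setminus A_{j-1}}=0$, so the formula automatically selects the FCFS removal position). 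Reusing the regrouping of Theorem \ref{thm.succint} for summability is legitimate, since that step is a purely algebraic resummation of the proposed measure and involves no circularity. The one loose spot is necessity at the boundary $\lambda_C=\mu_{\S(C)}$: dominating a zero-drift reflected walk does not by itself exclude positive recurrence; a clean finish is rate conservation --- in stationarity the per-step removal rate of $C$-agents would have to equal $\lambda_C=\mu_{\S(C)}$, forcing every arriving good in $\S(C)$ to find a compatible agent almost surely, which contradicts $\pi_X(\emptyset)>0$. What your approach buys is independence from the external references at the cost of a computation; what the cited approach buys is that the product form falls out of structural (partial balance, reversibility, multi-Bernoulli) results with essentially no algebra.
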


It is clear that for any sequence $z^0,z^1,\ldots,$ the directed FCFS matching is unique.  The corresponding Markov chain is $X(N),\,N=0,1,\ldots$ starting from no unmatched agents, i.e. $X(0)=\emptyset$.  However, If we prepend to the sequence for $n=0,1,\ldots$ a $z^{-1}$ and look at the directed FCFS matching for  $z^{-1},z^0,z^1,\ldots$ we usually will get a  different matching, i.e. the process starting from $X(-1)=\emptyset$ will be different from that starting with 
$X(0)=\emptyset$.   The following somewhat surprising theorem is similar to one proved for the undirected FCFS matching of a sequence of agents to an independent sequence of goods in \cite{adan-busic-mairesse-weiss:17}.
\begin{theorem}[Uniqueness Theorem]
\label{thm.uniqueness}
Let $\ldots,z^{-1},z^0,z^1,\ldots$ be a  sequence of agent  and good types chosen i.i.d. as above.  If the stability condition (\ref{eqn.stability}) holds then almost surely there  exists a directed FCFS matching of goods to cover all the agents, and this matching is unique.
\end{theorem}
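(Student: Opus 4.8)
The plan is to reduce everything to the unmatched-agents Markov chain $X(N)$ and to exploit its positive recurrence, which holds under the stability condition (\ref{eqn.stability}) by Theorem \ref{thm.stability}. The matching of the bi-infinite sequence will be obtained as a limit of the (unique, ``clear'') one-sided matchings gotten by pretending the sequence starts at position $-M$, and both the existence of a covering matching and its uniqueness will follow from a regeneration structure at the empty states of $X(N)$.

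First I would treat existence. For each $M$ let $\M_M$ denote the directed FCFS matching of the one-sided sequence $z^{-M},z^{-M+1},\ldots$, and let $X_M(\cdot)$ be the associated chain started from the empty state at time $-M$. Positive recurrence gives, almost surely, infinitely many empty-times $t$ (with $X_M(t)=\emptyset$) as $t\to+\infty$; at every such $t$ all agents in positions $\le t$ are matched to goods in positions $\le t$, and the portion of $\M_M$ on positions $>t$ coincides with the fresh matching started from empty at $t$. Consequently $\M_M$ decomposes into consecutive \emph{blocks} delimited by empty-times, each block being internally and completely matched. I would then show that $\M_M$ stabilises on every finite window as $M\to\infty$ and define the bi-infinite matching $\M$ as the limit. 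Since every position eventually lies strictly inside a finite block of $\M$, and every block is completely matched, $\M$ covers all the agents.

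For uniqueness I would argue that any directed FCFS matching $\M'$ that covers all agents must coincide with $\M$. The key is that the empty-times act as cut points: at such a $t$ no matched pair of $\M'$ can straddle $t$, because the agents in $(-\infty,t]$ are already cleared by goods in $(-\infty,t]$, while the stability condition (\ref{eqn.stability}) forbids an agent to the left of a cleared block from ``escaping'' rightward to claim a later good. Hence $\M'$ factorises over the same blocks as $\M$, and on each finite block the covering FCFS matching is forced: processing goods from left to right, each good is assigned to the oldest compatible unmatched agent and every agent must be matched, so there is no freedom. Therefore $\M'=\M$ block by block.

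The main obstacle is the convergence step -- equivalently, producing \emph{universal} break points $t\to-\infty$ that are empty-times simultaneously for all starting points $-M\le t$, so that the influence of the remote past dies out. I would obtain these by a coupling-from-the-past / renovation argument: drive all the chains $X_M$ by the common sequence and show they coalesce. The difficulty is that the state space is countably infinite (the number of unmatched agents is unbounded), so no single finite segment resets every state; instead I would combine a Doeblin-type segment that resets all states of length at most $K$ with the tightness of the reachable states (their length is tight because $X(N)$ is positive recurrent with the explicit stationary law (\ref{eqn.stationaryX})), and use ergodicity to guarantee such resetting segments recur infinitely often to the left. Verifying that stability indeed prevents straddling at break points, and making the coalescence quantitative, is where the real work lies; the remaining block-wise uniqueness is elementary.
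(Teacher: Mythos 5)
Your plan is essentially the route the paper takes: the paper does not prove Theorem \ref{thm.uniqueness} in-text but defers to \cite{adan-kleiner-righter-weiss:18}, summarizing the argument there as forward coupling, then backward coupling, then a Loynes construction --- which is exactly your scheme of regeneration at empty states of $X(N)$, coupling-from-the-past coalescence of the chains started at $-M$, and passing to the limit of the one-sided matchings. The two items you flag as ``where the real work lies'' (ruling out matched pairs that straddle a break point, and coalescence on the countable state space) are indeed the substantive steps of the cited proof, and the tools you propose for them are the right ones.
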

The proof is given in \cite{adan-kleiner-righter-weiss:18}.  The main steps are to show forward coupling, then to show backward coupling, and then to conclude that a Loynes construction will yield the unique matching almost surely.   A similar uniqueness result is also proved in \cite{moyal-busic-mairesse:17}.

For the next result, which is  also analogous to a similar result in  \cite{adan-busic-mairesse-weiss:17}, and is proven in \cite{adan-kleiner-righter-weiss:18}, we define the following {\em exchange transformation}:
\begin{definition}[Exchange Transformation]
For the  directed  FCFS matching, after all possible matches were made on $\ldots,z^{-1},z^0,z^1,\ldots$, we define the exchanged sequence $\ldots,\tz^{-1},\tz^0,\tz^1,\ldots$ as follows:  If $z^m=c_i$ was matched to $z^n=s_j$, where $m<n$, then in the exchanged sequence we will have $\tz^m=s_j$, $\tz^n=c_i$.  If $z^n=s_j$ was unmatched, then $\tz^n=z^n$.
Throughout the paper we use $\tc,\ts$ to denote items that were matched and exchanged.
\end{definition}

As a consequence of the stability and the unique matching over $\mathbb{Z}$, we obtain:

\begin{theorem}[Reversiblity Theorem]
\label{thm.reversibility}  
Assume the stability condition (\ref{eqn.stability}) holds.
The sequence  $\ldots,\tz^{-1},\tz^0,\tz^1,\ldots$ obtained from the sequence 
$\ldots,z^{-1},z^0,z^1,\ldots$ by the exchange transformation is an i.i.d. sequence.  The unique directed matches for the new sequence performed in reversed time, result in exactly the reversed matches of the original sequence, almost surely.
\end{theorem}


\section{Detailed Markov Chains with Multi-Bernoulli Stationary Distributions}
\label{sec.detailed}
We now define two additional Markov chains associated with the directed FCFS infinite bipartite matching model.    These processes describe the evolution of matching and exchanging items in the sequence $(z^n)_{-\infty<n<\infty}$.
We define {\em detailed goods matching process} $U(N)$,  to describe
 matching and exchanging all  goods up to (and including) $N$ with  agents that appear earlier in the sequence (with some goods left unmatched).
We define {\em the detailed agents matching process} $W(N)$, to describe matching and exchanging  all  agents up to (and including) $N$ with goods that appear later in the sequence.
For $U(N)$  we perform directed FCFS matches and exchanges of  all compatible pairs $z^m=c_i,z^n=s_j,\, m<n \le N$,   while for $W(N)$   we perform directed FCFS matches and exchanges of all compatible  pairs $z^m=c_i,z^n=s_j,\, m<n$, for all agents $m\le N$, including  some $z^n=s_j$ for which $n>N$.

Specifically,  $U(N)$ is defined as follows: if $X(N)=\emptyset$, i.e. all agents appearing up to $N$ have been matched to goods appearing up to $N$, then $U(N)=\emptyset$.  Otherwise, let 
$\underline{N}$ be the location in the sequence of the first unmatched agent,  then 
$U(N)=u=(u^1,\ldots,u^L)$ where $L=N-\underline{N}+1$, and  $U(N)$ records  the elements in the sequence from $\underline{N}$ to $N$,  after matching and exchanging of goods to earlier agents, and also exchanging the remaining unmatched goods up to $N$ with themselves.
The elements of $u$ are then of types $c_i$ or $\tc_i$ or $\ts_j$ and are  constructed   as follows:
\begin{compactitem}[-]
\item
$u^1=c_i$ where $c_i=z^{\underline{N}}$ is the type of the first unmatched agent,
\item
$u^l=c_{i'}$ if $z^{\underline{N}+l-1}=c_{i'}$ is unmatched, 
\item
$u^l=\ts_j$ if $z^{\underline{N}+l-1}=s_j$  remains  unmatched (it is exchanged with itself),  
\item
$u^l=\tc_{i''}$ if $z^{\underline{N}+l-1}$ has originally been a good, and has been matched and exchanged with an agent of type $c_{i''}$ that appeared earlier in the sequence,
\item
$u^l=\ts_{j'}$ if $z^{\underline{N}+l-1}$ has originally been an agent, and has been matched and exchanged with a good of type $s_{j'}$ that has appeared later in the sequence, in a position $\le N$.
\end{compactitem}  

Similarly, $W(N)$ is defined as follows: if $X(N)=\emptyset$, i.e. all agents appearing up to $N$ have been matched to goods appearing up to $N$, then $W(N)=\emptyset$.  Otherwise, let 
$\overline{N}$ be the location in the sequence of the last matched and exchanged good (we have $\overline{N}>N$ since $X(N)\ne \emptyset$),  then 
$W(N)=w=(w^1,\ldots,w^K)$ where $K=\overline{N}-N$, and  $W(N)$ records  the elements in the sequence from  $N+1$ to $\overline{N}$,  after matching and exchanging of agents from locations up  to $N$ with later goods.
The elements of $w$ are  of types $c_i$ or $\tc_i$ or $s_j$ and are  constructed   as follows:
\begin{compactitem}[-]
\item
$w^K=\tc_i$ where $z^{\overline{N}+K}$ has been a good that has been matched and exchanged with an agent of type $c_i$ that appeared in the sequence in a position $\le N$
\item
$w^l=c_{i'}$ if $c_{i'}=z^{N+l}$ is as yet unmatched, because only agents up to $N$ have been matched.
\item
$w^l=s_j$ if $s_j=z^{N+l}$  has not been matched to an agent appearing up to $N$,  
\item
$w^l=\tc_{i'}$ if $z^{N+l}$ has originally been a good, and has been matched and exchanged with an agent of type $c_{i'}$ that appeared in the sequence in position $\le N$\end{compactitem}  

We now use the reversibility theorem.  
By the reversibility Theorem \ref{thm.reversibility} under the condition (\ref{eqn.stability}),  almost surely, if we take the sequence $(z^n)_{-\infty<n<\infty}$ and do all the directed FCFS matches and exchanges we obtain the sequence $(\tz^n)_{-\infty<n<\infty}$ which is again i.i.d. and if we do directed FCFS matches and exchanges in reversed time on the sequence $(\tz^n)_{-\infty<n<\infty}$  we will retrieve the original sequence $(z^n)_{-\infty<n<\infty}$.  In fact, the pairs that are matched are the same in both sequences.

We  consider the sequence $(\tz^n)_{-\infty<n<\infty}$, and define  goods and agents detailed matching processes for it in reversed time:   $\tilde{U}(N+1)$, $\tilde{W}(N+1)$ are obtained when we do matching for the  sequence $(\tz^n)_{-\infty<n<\infty}$, 
where for $\tilde{U}(N+1)$  we perform directed FCFS matches and exchanges in reversed time, of  pairs $\tz^n=\ts_j, \,\tz^m=\tc_i, m>n$, for all goods in positions $n > N$, while for $\tilde{W}(N+1)$   we perform directed FCFS matches and exchanges in reversed time of  pairs $\tz^n=\ts_j, \,\tz^m=\tc_i, m>n$, for all agents in positions $m> N$.

For any vector $u$ consisting of items $c,\tc,s,\ts$ we denote by $\tu$ the exchanged vector in which we exchange between $c,\tc$ and between $s,\ts$.  We denote by $\au$ the reversed vector which is $u$ written in reversed order.  We will also use the notation $\atu$ for exchanging and reversing.

\begin{proposition}
\label{thm.same}
For almost all  $(z^n)_{-\infty<n<\infty}$ and corresponding  $(\tz^n)_{-\infty<n<\infty}$, and  for  $-\infty<N<\infty$ we have: 
\begin{equation}
\label{eqn.same1}
\mbox{If }\quad  
\begin{array}{l}
U(N)=u=(u^1,\ldots,u^L) \\
U(N+1) = u' = ({u'}^1,\ldots,{u'}^{L'})
\end{array}
\quad\mbox{ then } \quad
\begin{array}{l}
\tilde{W}(N+2)={\au}'=({u'}^{L'},\ldots,{u'}^1),\\
\tilde{W}(N+1)=\au=(u^L,\ldots,u^1).
\end{array}
\end{equation}
and similarly,
\begin{equation}
\label{eqn.same2}
\mbox{If } \quad 
\begin{array}{l}
W(N)=w=(w^1,\ldots,w^K) \\
W(N+1)=w'=({w'}^1,\ldots,{w'}^{K'}) 
\end{array}
\quad \mbox{ then }\quad 
\begin{array}{l}
\tilde{U}(N+2)={\aw}'=({w'}^{K'},\ldots,{w'}^1), \\
\tilde{U}(N+1)=\aw=(w^K,\ldots,w^1).
\end{array}
\end{equation}
\end{proposition}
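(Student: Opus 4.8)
The plan is to reduce both displays to the single-index identity $\tilde{W}(N+1)=\au$ (and, symmetrically, $\tilde{U}(N+1)=\aw$) holding for \emph{every} $N$: instantiating this at $N$ and at $N+1$ produces both conclusions of \eqref{eqn.same1} (resp.\ \eqref{eqn.same2}) at once, so it suffices to treat a single index. The one tool I would lean on throughout is the Reversibility Theorem \ref{thm.reversibility}: it guarantees that $(\tz^n)$ is the exchange of $(z^n)$ and that the reversed-time directed FCFS matching of $(\tz^n)$ pairs up exactly the same positions as the forward matching of $(z^n)$. In particular, for any position $p$ the identity of its matched partner---and hence whether that partner sits at a position $\le N$ or $>N$---is an invariant of the matching that can be read off from either sequence. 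This invariance is what lets the forward process $U$ and the reversed process $\tilde{W}$ agree even though they process different pairs.

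First I would pin down the recorded window. By construction $U(N)$ occupies positions $\underline{N},\ldots,N$, where $\underline{N}$ is the first agent still unmatched after all goods up to $N$ have been matched. I would show that $\tilde{W}(N+1)$ occupies the same positions in reverse order. The upper end is immediate: the reversed $\tilde{W}$-process is anchored just below the cut between $N$ and $N+1$, so its first recorded entry is position $N$. The lower end is the crux: I must show that the reversed matching triggered by the agents of $(\tz^n)$ lying at positions $>N$ reaches down exactly to $\underline{N}$ and no further. This uses the defining property of $\underline{N}$, namely that every agent strictly before $\underline{N}$ is matched to a good $\le N$; in $(\tz^n)$ these positions are therefore goods whose reversed partner is an agent at a position $\le N$, so they are never activated by the $>N$ trigger, and the cascade halts precisely at $\underline{N}$.

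With the windows aligned, I would compare symbols position by position. For each $p\in\{\underline{N},\ldots,N\}$ the displayed symbol is determined by whether $z^p$ was originally an agent or a good and by the location of its partner relative to $N$, giving four cases: an agent whose partner good is $>N$ shows $c$; an agent whose partner good is $\le N$ shows $\ts$; a matched good (whose partner agent is necessarily $<p\le N$) shows $\tc$; and an unmatched good shows $\ts$. The process $U(N)$ produces these by definition, and the point to verify is that $\tilde{W}(N+1)$ produces the identical symbol. This is exactly where the complementarity of the two processing thresholds enters: $\tilde{W}(N+1)$ activates precisely those pairs whose good-endpoint lies at a position $>N$, i.e.\ the pairs that $U(N)$ leaves unmatched, and for such a pair the segment position is the agent, which the reversed exchange turns back into $c$, matching the first case. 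Every other segment position is left in its $\tz$-form, and the case check shows that its $\tz$-symbol ($\tc$ for a good-turned-agent, $\ts$ for a matched-agent-turned-good or an unmatched good) coincides with the $U(N)$-symbol. Since reversed time reverses the order of positions, this delivers $\tilde{W}(N+1)=\au$.

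The identity $\tilde{U}(N+1)=\aw$ follows by the mirror argument, interchanging the roles of agents and goods and of forward and reversed time, and I would phrase the two together to avoid duplication. The step I expect to be the main obstacle is the lower-boundary argument of the second paragraph: showing that the reversed $>N$-triggered cascade on $(\tz^n)$ terminates exactly at $\underline{N}$ is an FCFS argument about the cascade itself rather than a symbol count, and it is where the defining property of $\underline{N}$ and the pair-invariance from Theorem \ref{thm.reversibility} must be combined most carefully. Once the two windows are shown to coincide, the entrywise correspondence is a routine, if slightly tedious, verification of the four cases.
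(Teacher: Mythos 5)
Your proposal is correct and follows essentially the same route as the paper's proof: invoke the Reversibility Theorem \ref{thm.reversibility} to identify which matched pairs each of $U(N)$ and $\tilde{W}(N+1)$ has exchanged (goods at positions $\le N$ versus $>N$), argue that both windows run from $\underline{N}$ to $N$, and check entrywise that the displayed symbols agree. Your treatment of the lower boundary of the window and the four-case symbol comparison is somewhat more explicit than the paper's, but it is the same argument.
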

\begin{proof}
By definition, $U(N)$ will have all goods in the sequence $z^n$ up to $N$ matched and exchanged, and will list the resulting items from $\underline{N}$ to $N$, while all items in positions $>N$ are still unmatched and not exchanged.  On the other hand, $\tilde{W}(N+1)$ will have all the agents  in the exchanged sequence $\tz^n$ from $N+1$ to $\infty$ matched and exchanged back to their values in $z^n$, and list the items from $N$ to $\underline{N}$ in reversed order.  
In particular, the first item in $U(N)$ is  the earliest unmatched agent $z^{\underline{N}}=c_i$, and it could only be matched later with $z^m=s_j$ which is in position $m>N$, while  in $\tilde{W}(N+1)$ $\tz^m=\tc_i$ is matched and exchanged back with $\tz^{\underline{N}}=\ts_j$.  Similarly any unmatched agent $z^n=c_i,\,n<N$ in $U(N)$ has a match $z^m$ with $m>N$, hence $\tz^n=\ts_j$, and $\tz^m=\tc_i$, and in $\tilde{W}(N+1)$ they will be matched and exchanged back.    All other elements of $U(N)$ are exchanged $\tc$ or $\tz$ which are in their original positions in $\tilde{W}(N+1)$.
Hence $\tilde{W}(N+1)$ lists the same items as $U(N)$ in reversed order.
By exactly the same argument,  $\tilde{W}(N+2)$ lists the same items as $U(N+1)$ in reversed order.
The statement  (\ref{eqn.same2}) can be proved similarly, and in fact it follows directly from the reversibility Theorem \ref{thm.reversibility}.
\qed
\end{proof}

\begin{proposition}
\label{thm.conditionals}
For the stationary versions of the detailed Markov process $U(N)$ the reversed transition probabilities are given by: 
\begin{equation}
\label{eqn.conditionals}
\begin{array}{l}
P\big(U(N+1)=u'\big)\,P\big(U(N)=u|U(N+1)=u'\big)   \\
=P\big(W(N)=\overleftarrow{\tilde{u}'} \big)\, P\big(W(N+1)=\atu | W(N)=\overleftarrow{\tilde{u}'} \big). 
\end{array}
\end{equation}
\end{proposition}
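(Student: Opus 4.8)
The plan is to read each product in (\ref{eqn.conditionals}) as a two–step joint probability and then to pass, via the reversed–time processes on $(\tz^n)$, from the chain $U$ to the chain $W$. Writing the left–hand side as $P\big(U(N)=u,\,U(N+1)=u'\big)$ and the right–hand side as $P\big(W(N)=\overleftarrow{\tilde{u}'},\,W(N+1)=\atu\big)$, it suffices to establish the single joint identity
\begin{equation}
\label{eqn.jointtoshow}
P\big(U(N)=u,\,U(N+1)=u'\big)=P\big(W(N)=\overleftarrow{\tilde{u}'},\,W(N+1)=\atu\big).
\end{equation}
The first step is pathwise and uses Proposition \ref{thm.same} directly. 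On the coupled space carrying $(z^n)$ together with its exchange $(\tz^n)$, equation (\ref{eqn.same1}) identifies the event $\{U(N)=u,\,U(N+1)=u'\}$ with $\{\tW(N+1)=\au,\ \tW(N+2)={\au}'\}$. This is in fact an equality of events rather than a one–sided implication: by the Reversibility Theorem \ref{thm.reversibility} the exchange transformation is almost surely invertible, reversed–time matching on $(\tz^n)$ returning $(z^n)$, so $U(N)$ and $U(N+1)$ are measurable functions of $\tW(N+1),\tW(N+2)$ as well. Hence
\begin{equation}
\label{eqn.step1}
P\big(U(N)=u,\,U(N+1)=u'\big)=P\big(\tW(N+1)=\au,\ \tW(N+2)={\au}'\big).
\end{equation}

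The second and decisive step is distributional, and here the full force of Theorem \ref{thm.reversibility} enters. The exchanged sequence $(\tz^n)$ is itself i.i.d. with exactly the law of $(z^n)$, and the reversed–time matching that defines $\tW$ on $(\tz^n)$ is nothing but the mirror image — time reversed, and with the matched and unmatched item types exchanged — of the forward–time matching that defines $W$ on $(z^n)$. Since the two constructions are applied to sequences of the same i.i.d. law and differ only by this reverse–and–exchange, the stationary two–step joint law of $\tW$ equals that of $W$ with the time order reversed and the states exchanged. Concretely, the $W$–configuration corresponding to a $\tW$–configuration $v$ is its exchange $\tilde v$, and the later $\tW$–index is matched to the earlier $W$–index, giving
\begin{equation}
\label{eqn.step2}
P\big(\tW(N+1)=\au,\ \tW(N+2)={\au}'\big)=P\big(W(N+1)=\atu,\ W(N)=\overleftarrow{\tilde{u}'}\big),
\end{equation}
where I have used that reversal and exchange act on disjoint attributes (the positions and the item types, respectively) and therefore commute, so that $\widetilde{\au}=\overleftarrow{\tilde{u}}=\atu$ and $\widetilde{{\au}'}=\overleftarrow{\tilde{u}'}$. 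Combining (\ref{eqn.step1}) and (\ref{eqn.step2}) and reordering the two coordinates of the joint event on the right yields (\ref{eqn.jointtoshow}), which is (\ref{eqn.conditionals}).

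The step I expect to demand the most care is the distributional identity (\ref{eqn.step2}): one must verify that running the matching backwards on the i.i.d. sequence $(\tz^n)$ reproduces in law the forward $W$–chain after the combined reverse–and–exchange relabelling, and in particular that the consecutive indices $N+1,N+2$ appearing on the $\tW$ side correspond, in reversed order, to the consecutive indices $N+1,N$ on the $W$ side. This is the genuinely dynamic content of reversibility, beyond the static statement that $(\tz^n)\equalD(z^n)$, and it is where Theorem \ref{thm.reversibility} is used essentially; once it is in hand, the only remaining work is the bookkeeping of the operations $\overleftarrow{\,\cdot\,}$ and $\tilde{\,\cdot\,}$, which commute.
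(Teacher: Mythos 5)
Your proof is correct and takes essentially the same route as the paper's: both sides are rewritten as two-step joint probabilities, Proposition \ref{thm.same} supplies the pathwise identification of the $U$-event with the $\tW$-event, and the Reversibility Theorem \ref{thm.reversibility} supplies the distributional identity between $\tW$ run in reversed time and $W$ run forward (after the exchange of states). The only immaterial difference is that the paper refactors the $\tW$ joint probability into a marginal times a conditional before invoking reversibility, whereas you apply reversibility directly at the level of the joint law.
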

\begin{proof}
\begin{equation}
\begin{array}{ll}
P\big(U(N+1)=u'\big)\,P\big(U(N)=u|U(N+1)=u'\big)  &\\ 
\;= P\big(U(N)=u \, , \, U(N+1)=u'\big)  &  \\
\;=  P\big(\tW(N+1)=\overleftarrow{u} \, , \,\tW(N+2)=\au'  \big) & \\
\;= P\big(\tW(N+2)=\au' \big)\, P\big(\tW(N+1)=\overleftarrow{u} | \tW(N+2)=\au' \big) & \\
\;= P\big(W(N)=\atu' \big)\, P\big(W(N+1)=\atu | W(N)=\atu' \big). 
\end{array}
\end{equation}
The first and third equalities are by definition of conditional probability.  
The second equality holds for the stationary processes by Proposition \ref{thm.same}, equation (\ref{eqn.same1}).  The last equality follows from the reversibility Theorem \ref{thm.reversibility}, since the Markov chain $W(N)$ driven by the sequence $z^n$  has the same stationary distribution and the same transition probabilities as the Markov chain $\tW(N)$ driven by the sequence $
\tz^n$ in reversed time.
\qed  \end{proof}

\begin{theorem}[Multi-Bernoulli Form]
\label{thm.detailed}
If the stability conditions (\ref{def.stabilitycond}) holds, then the stationary distribution of the detailed Markov chains is given by:
\begin{equation}
\label{eqn.detailedstationary}
\begin{array}{l}
\displaystyle  \pi_U(u_1,\ldots,u_L)=\pi_W(u_1,\ldots,u_L)  \\  
\displaystyle = B 
\left(\frac{\rlambda}{\rlambda + \rmu}\right)^{\sharp \mbox{agents}}
\left(\frac{\rmu}{\rlambda + \rmu}\right)^{\sharp \mbox{goods}}
\prod_{c_i\in\C} \alpha_{c_i}^{\sharp c_i + \sharp \tc_i}
\prod_{s_j\in\S} \beta_{s_j}^{\sharp s_j + \sharp \ts_j},
\end{array}
\end{equation}
where $B$ is the normalizing constant given by (\ref{eqn.constant}), and we are counting the number of agents and of goods overall, and of each type separately, in $u_1,\ldots,u_L$.
\end{theorem}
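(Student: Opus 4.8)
The plan is to prove the product form by a guess-and-verify argument in the spirit of Kelly's lemma: I take the right-hand side of (\ref{eqn.detailedstationary}) as a candidate stationary measure $\pi$ for the chain $U(N)$, exhibit a candidate time-reversed kernel, and check the pairwise balance relation together with stochasticity of the reversed kernel. The natural candidate for the reversal is supplied by Propositions \ref{thm.same} and \ref{thm.conditionals}: the time-reversal of $U$ is the forward chain $W$ read at the exchanged-and-reversed state $\atu$. Two structural observations make this workable. First, the candidate weight depends on $u$ only through the counts of agents, goods, and types, and these counts are invariant under exchanging $c_i\leftrightarrow\tc_i$, $s_j\leftrightarrow\ts_j$ and under reversing the order; hence $\pi(u)=\pi(\tu)=\pi(\au)=\pi(\atu)$, so in particular $\pi_U=\pi_W$ as claimed. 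Second, exchanging a matched pair preserves the product of frequencies $\frac{\rlambda}{\rlambda+\rmu}\alpha_{c_i}\cdot\frac{\rmu}{\rlambda+\rmu}\beta_{s_j}$, which is what ultimately lets each matching step move $\pi$ by exactly the frequency of a single item.

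First I would write down the forward transitions of $U(N)$ from the matching dynamics, reading $z^{N+1}$ and updating the window. From a state $u$ whose unmatched agents are $c^1,\dots,c^m$ in order, there are four moves: (a) an arriving agent $c_i$ is appended as a new unmatched agent, with probability $\frac{\rlambda}{\rlambda+\rmu}\alpha_{c_i}$; (b) an arriving good $s_j$ incompatible with all of $c^1,\dots,c^m$ is appended as a lost good $\ts_j$, with probability $\frac{\rmu}{\rlambda+\rmu}\beta_{s_j}$; (c) an arriving good $s_j$ that matches a non-leading unmatched agent $c^k=c_i$ turns that slot into $\ts_j$ and appends $\tc_i$, with probability $\frac{\rmu}{\rlambda+\rmu}\beta_{s_j}$ (admissible only when $s_j\in\S(c_i)$ but $s_j\notin\S(\{c^1,\dots,c^{k-1}\})$); and (d) an arriving good that matches the leading agent $c^1=c_i$ advances the window, dropping the matched prefix up to the next unmatched agent and appending $\tc_i$. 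In case (d) the matched good is dropped and therefore not recorded, so every good in $\S(c_i)$ produces the same successor and the total probability is $\frac{\mu_{\S(c_i)}}{\rlambda+\rmu}$. In cases (a)--(c) one checks directly from the definition that $\pi(u')/\pi(u)$ equals precisely the transition probability, so these moves satisfy the balance relation termwise against the corresponding ``peel-from-the-front'' step of $W$, and the reversed step is uniquely determined (the trailing symbol of $u'$ records whether the last move was of type (a), (b), or a matching move).

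Next I would verify the Kelly condition $\pi(u)\,P_U(u,u')=\pi(\atu')\,P_W(\atu',\atu)$ case by case and confirm $\sum_u P_W(\atu',\atu)=1$, so that the reversed kernel is stochastic and $\pi$ is therefore stationary. Here the decisive point is that the FCFS rule makes the competing readings of a repeated symbol mutually exclusive: a trailing $\tc_i$ in $u'$ arises from a non-leading match (case (c)) exactly when the relevant $\ts_j$ is a genuine matched-agent slot, i.e. $s_j\in\S(c_i)$, and from a window-advancing match (case (d)) exactly when that symbol is instead a lost good, i.e. $s_j\notin\S(c_i)$; these conditions partition the predecessors, so no over- or under-counting occurs. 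The uniqueness of this backward reconstruction is exactly the reversed-time content of the Uniqueness and Reversibility Theorems \ref{thm.uniqueness}--\ref{thm.reversibility}, and it is encoded in the admissibility characterization of Theorem \ref{thm.characterization}. Finally, $B$ is fixed by normalization and equals the constant of (\ref{eqn.constant}).

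The step I expect to be the main obstacle is case (d), the window-advancing match. Because the matched good is dropped, an entire family of predecessors---indexed by the admissible ``fillings'' of already-matched items $\tc,\ts$ that lie between the leading agent $c_i$ and the next unmatched agent---collapses onto a single successor, and the compatibility-dependent factor $\mu_{\S(c_i)}$ appears instead of a single frequency. Balancing this move requires showing that the frequency-weighted sum over all such fillings sums geometrically to exactly cancel $\mu_{\S(c_i)}/(\rlambda+\rmu)$; this is precisely where the stability condition (\ref{eqn.stability}) is needed to guarantee convergence of the sums. A useful consistency check, and an alternative route to the same identity, is that summing the candidate $\pi_U$ over all fillings for a fixed ordered set of unmatched agents $c^1,\dots,c^m$ must reproduce the product form $\pi_X$ of Theorem \ref{thm.stability}; the denominators $\mu_{\S(\{c^1,\dots,c^\ell\})}$ there are generated by exactly these geometric filling-sums, which both motivates the computation and pins down the normalization.
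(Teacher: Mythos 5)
Your proposal follows essentially the same route as the paper: take the multi-Bernoulli expression as a candidate measure, use the exchange-and-reverse correspondence of Propositions \ref{thm.same}--\ref{thm.conditionals} to identify the time-reversal of $U$ with the forward chain $W$ evaluated at $\atu$, note the invariance $\pi(u)=\pi(\tu)=\pi(\au)=\pi(\atu)$, and verify Kelly's pairwise balance over the same four transition types. One point to correct before executing the details: in your case (d) no geometric summation over fillings is needed to balance the move. The balance there is still pairwise, because the reversed $W$-transition from $\overleftarrow{\tilde{u}'}$ to the \emph{specific} state $\atu$ is not deterministic (unlike cases (a)--(c)): its probability is exactly the product of the item frequencies of the dropped prefix times the frequency of the matching good (the quantity $\theta$ in the paper's Case~(iv)), and this product cancels against the ratio $\pi_U(u)/\pi_U(u')$ term by term, one predecessor at a time. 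The geometric sum you describe belongs only to the check that the reversed kernel is stochastic, $\sum_u P_W(\atu',\atu)=1$, which the paper gets for free because $P_W$ is the transition kernel of an honest Markov chain; moreover that sum converges whenever $\mu_{\S(c_i)}>0$, whereas the stability condition (\ref{eqn.stability}) actually enters through normalizability of $\pi$ (i.e.\ positivity of $B$ in (\ref{eqn.constant})), not through convergence of those particular sums.
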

\begin{proof}
That $\pi_U(u_1,\ldots,u_L)=\pi_W(u_1,\ldots,u_L)$ follows from Proposition \ref{thm.same}. 
By definition   
$\pi_U(\emptyset)=\pi_W(\emptyset)=\pi_X(\emptyset)$.  We will show that  $\pi_X(\emptyset)=B$ and show that  $B$ is given by (\ref{eqn.constant}) in Section \ref{sec.constant}.
 It remains to prove (\ref{eqn.detailedstationary}) for $\pi_U$.  For a stationary Markov chain this is equivalent to verifying that:
\begin{equation}
\label{eqn.kelly}
\pi_U(u)P\big(U(N+1)=u'|U(N)=u\big) = \pi_U(u')P\big(U(N)=u|U(N+1)=u'\big).
\end{equation}
We have found in Proposition \ref{thm.conditionals} that the reversed transition  probabilities for $U$ are given by transition probabilities of $W$.   Also, we note that in the form of equation (\ref{eqn.detailedstationary}), $\pi_U(u)=\pi_U(\au)=\pi_U(\tu)=\pi_U(\atu)$, with similar equalities for $W$.  
Therefore to prove (\ref{eqn.kelly}) we will verify that the expressions in (\ref{eqn.detailedstationary}) satisfy:
\begin{equation}
\label{eqn.kelly1}
\frac{\pi_U(u')}{\pi_U(u)} =  
\frac{ P\big(U(N+1)=u'|U(N)=u\big) }
{ P\big(W(N+1)=\atu | W(N)=\overleftarrow{\tilde{u}'}\big) }.
\end{equation}
and to do so we need to calculate  transition probabilities for $U$ and for $W$ from $N$ to $N+1$.  These transition probabilities hinge on the identity of $x=z^{N+1}$ and the corresponding  $\tx=\tz^{N+1}$.   We start from $U(N)=u=(u_1,\ldots,u_L)$.
There are four cases:

\subsubsection*{Case (i):  $x = c_i$}
If $U(N)=u=(u_1,\ldots,u_L)$  and $z^{N+1}=c_i$, then $U(N+1)=u'=(u_1,\ldots,u_L,c_i)$.
The probability of the transition 
$u\to u'$ is $\frac{\rlambda}{\rlambda + \rmu} \alpha_{c_i}$.

In that case $W(N)=\overleftarrow{\tilde{u}'}=(\tc_i,\tu_L,\ldots,\tu_1)$.  This means that the  $z^{N+1}$ has already been matched and exchanged, and is now $\tz^{N+1}=\tx=\tc_i$.
Therefore all agents up to $N+1$ have been matched, and $W(N+1)=\atu = (\tu_L,\ldots,\tu_1)$.  The probability of the transition:
$\overleftarrow{\tilde{u}'} \to \atu$ is 1.

We now check:
\begin{eqnarray*}
&& \frac{ \pi_U(u_1,\ldots,u_L,c_i) }{ \pi_U(u_1,\ldots,u_L) } = \frac{\rlambda}{\rlambda + \rmu} \alpha_{c_i}, \\
&& \frac{ P\big(  U(N+1)=(u_1,\ldots,u_L,c_i) \,|\, U(N)= (u_1,\ldots,u_L)  \big) }
{ P\big(  W(N+1)= (\tu_L,\ldots,\tu_1) \,|\, W(N) = (\tc_i,\tu_L,\ldots,\tu_1) \big) } =\frac{\rlambda}{\rlambda + \rmu} \alpha_{c_i}.
\end{eqnarray*}

\subsubsection*{Case (ii):  $x = s_j$, and $s_j$ is incompatible with the unmatched agents in 
$(u_1,\ldots,u_L)$}

If $U(N)=u=(u_1,\ldots,u_L)$  and $z^{N+1}=s_j$ and $s_j$ is incompatible with any of the 
unmatched agents in $(u_1,\ldots,u_L)$,   
then $s_j$ remains unmatched and is replaced by $\ts_j$ (matched with itself), so that $U(N+1)=u'=(u_1,\ldots,u_L,\ts_j)$.
The probability of the transition 
$u\to u'$ is $\frac{\rmu}{\rlambda + \rmu} \beta{s_j}$.

In that case $W(N)=\overleftarrow{\tilde{u}'}=(s_j,\tu_L,\ldots,\tu_1)$.  This means that the $N+1$ element is $z^{N+1}=s_j$.  Since all agents prior to $N$ have been matched, $s_j$ will remain unmatched, and in the transition from $W(N)$ to $W(N+1)$ there will be no change in $\tu_L,\ldots,\tu_1)$, and so $W(N+1)=\tu_L,\ldots,\tu_1)=\atu$.  The probability of this transition is again 1.

We now check:
\begin{eqnarray*}
&& \frac{ \pi_U(u_1,\ldots,u_L,\ts_j) }{ \pi_U(u_1,\ldots,u_L) } = \frac{\rmu}{\rlambda + \rmu} \beta_{s_j}, \\
&& \frac{ P\big(  U(N+1)=(u_1,\ldots,u_L,\ts_j) \,|\, U(N)= (u_1,\ldots,u_L)  \big) }
{ P\big(  W(N+1)= (\tu_L,\ldots,\tu_1) \,|\, W(N) = (s_j,\tu_L,\ldots,\tu_1) \big) } 
= \frac{\rmu}{\rlambda + \rmu} \beta_{s_j}.
\end{eqnarray*}

\subsubsection*{Case (iii):  $x = s_j$, which is matched FCFS with $u_l = c_i$, where $l>1$}
If $U(N)=u=(u_1,\ldots,u_L)$  and $z^{N+1}=s_j$ and $s_j$ is compatible with 
$u_l = c_i$, and is incompatible with all unmatched agents among $u_1,\ldots,u_{l-1}$, then $s_j$ is matched and exchanged with $u_l$, so:  $U(N+1) =u'= (u_1,\ldots,u_{l-1},\ts_j,u_{l+1},\ldots,u_L,\tc_i)$.  The probability of this transition $u\to u'$ is  $\frac{\rmu}{\rlambda + \rmu} \beta{s_j}$.

In that case 
$W(N)=\overleftarrow{\tilde{u}'}=(c_i,\tu_L,\ldots,\tu_{l+1},s_j,\tu_{l-1}.\ldots,\tu_1)$.  
Because $z^{N+1}=c_i$, it needs to be matched and exchanged in the transition from 
$W(N)$ to $W(N+1)$.  Clearly it can be matched and exchanged with $s_j$ which is between 
$\tu_{l+1},\tu_{l-1}$.  We claim that $c_i$ cannot be matched to any of $\tu_L,\ldots,\tu_{l+1}$.  Recall that it was left unmatched in $U(N)$.  Assume some $\,l+1\le k \le L$, $\tu_k=s_{j'}$ is compatible with $c_i$.  in that case, $u_k=\ts_{j'}$ and there are two possibilities:  either $z^{\underline{N}+k-1}=s_{j'}$ could not be matched, and was exchanged to $\ts_{j'}$, or $z^{\underline{N}+k-1}=c_{i'}$ that was matched and exchanged with $s_{j'}$ that appeared later in the sequnce but before $N+1$.  But in either of these cases $s_{j'}$ should then have been matched and exchanged with $z^{\underline{N}+l-1}=c_i$, which is in contradiction to $u_l=c_i$ being unmatched in $U(N)$.  It follows that in this case in
the transition from $W(N)$ to $W(N+1)$, $c_i$ will be matched and exchanged with $s_j$ and $W(N+1) = (\tu_L,\ldots,\tu_{l+1},\tc_i,\tu_{l-1}.\ldots,\tu_1)$, which is $\atu$.  The probability of this transition is 1.

We now check:
\begin{eqnarray*}
&& \frac{ \pi_U(u_1,\ldots,u_{l-1},\ts_j,u_{l+1},\ldots,u_L,\tc_i) }
{ \pi_U(u_1,\ldots,u_{l-1},c_i,u_{l+1},\ldots,u_L) } 
= \frac{\rmu}{\rlambda + \rmu} \beta{s_j}, \\
&& \frac{ P\big(  U(N+1)=(u_1,\ldots,u_{l-1},\ts_j,u_{l+1},\ldots,u_L,\tc_i) \,|
\, U(N)= (u_1,\ldots,u_{l-1},c_i,u_{l+1},\ldots,u_L)  \big) }
{ P\big(  W(N+1)= (\tu_L,\ldots,\tu_{l+1},\tc_i,\tu_{l-1}.\ldots,\tu_1) \,|
\, W(N) = (c_i,\tu_L,\ldots,\tu_{l+1},s_j,\tu_{l-1}.\ldots,\tu_1) \big) } 
= \frac{\rmu}{\rlambda + \rmu} \beta_{s_j}.
\end{eqnarray*}

\subsubsection*{Case (iv):  $x = s_j$, which can matched FCFS to $u_1 = c_i$.}
In Cases (i)-(iii), the reversed transition was  $\atu' \to \atu$ with probability 1.  In Case (iv) this is no longer so, and we will need to calculate the probability of this transition.

Assume $U(N)=u=(c_i,u_2,\ldots,u_{l-1},c_{i'},u_{l+1},\ldots,u_L)$, where $u_2,\ldots,u_{l-1}$ contain no unmatched agents, and also it does not contain any goods compatible with $c_i$. Assume $z^{N+1}=s_j$, and $s_j$ is compatible with 
$u_1 = c_i$.
Then $s_j$ is matched and exchanged with $u_1$ (recall that $u_1$ is always an unmatched agent).  The resulting state then is $U(N+1)=u'=(c_{i'},u_{l+1},\ldots,u_L,\tc_i)$, because now $z^{\underline{N}+l-1}=u_l=c_{i'}$ is the first unmatched agent.
The  probability of this transition $u\to u'$ is  $\frac{\rmu}{\rlambda + \rmu} \beta{s_j}$.

In that case 
$W(N)=\overleftarrow{\tilde{u}'}=(c_i,\tu_L,\ldots,\tu_{l+1},\tc_{i'})$.  
Because $z^{N+1}=c_i$, it needs to be matched and exchanged in the transition from 
$W(N)$ to $W(N+1)$.  By the same argument as for Case (iii), $c_i$ is not compatible with any unmatched goods in $\tu_L,\ldots,\tu_{l+1}$.  We now consider the possibility that conditional on $W(N)=\overleftarrow{\tilde{u}'}$, the next transition will be to the state $W(N+1)=\atu=(\tu_L,\ldots,\tu_{l+1},\tc_{i'},\tu_{l-1},\ldots,\tu_2,\tc_i)$.  This will happen if and only if  $(z^{\overline{N}+1},\ldots,z^{\overline{N}+l-1}) = (\tu_{l-1},\ldots,\tu_2,s_j)$.  
Denote by $\theta$ the probability of this event,  then:
\begin{eqnarray*}
\theta &=& P(W(N+1)=\atu\,|\,W(N)=\overleftarrow{\tilde{u}'}) =
P\big((z^{\overline{N}+1},\ldots,z^{\overline{N}+l-1}) = (\tu_{l-1},\ldots,\tu_2,s_j)\big) \\
&=& \left(\frac{\rlambda}{\rlambda + \rmu}\right)^{\sharp \mbox{agents}}
\left(\frac{\rmu}{\rlambda + \rmu}\right)^{\sharp \mbox{goods}}
\prod_{c_i\in\C} \alpha_{c_i}^{\sharp c_i + \sharp \tc_i}
\prod_{s_j\in\S} \beta_{s_j}^{\sharp s_j + \sharp \ts_j}  \times 
\frac{\rmu}{\rlambda + \rmu} \beta{s_j},
\end{eqnarray*}
where we are counting the number of agents and of goods overall, and of each type separately, in $\tu_{l-1},\ldots,\tu_2$.

We now check:
\begin{eqnarray*}
&& \frac{ \pi_U(c_{i'},u_{l+1},\ldots,u_L,\tc_i) }
{ \pi_U(c_i,u_2,\ldots,u_{l-1},c_{i'},u_{l+1},\ldots,u_L) } 
=  \theta^{-1}  \frac{\rmu}{\rlambda + \rmu} \beta{s_j}, \\
&& \frac{ P\big(  U(N+1)=(c_{i'},u_{l+1},\ldots,u_L,\tc_i) \,|
\, U(N)= (c_i,u_2,\ldots,u_{l-1},c_{i'},u_{l+1},\ldots,u_L)  \big) }
{ P\big(  W(N+1)= (\tu_L,\ldots,\tu_{l+1},\tc_{i'},\tu_{l-1},\ldots,\tu_2,\tc_i) \,|
\, W(N) = (c_i,\tu_L,\ldots,\tu_{l+1},\tc_{i'}) \big) } 
=  \theta^{-1} \frac{\rmu}{\rlambda + \rmu} \beta_{s_j}.
\end{eqnarray*}
\qed  \end{proof}

The stationary distribution of the Markov chains $U(N)$ and $W(N)$ looks very simple:  It seems to be the distribution of a sequence of independent trials with outcomes in $\C\cup \S$ with fixed probabilities, we call that a multi-Bernoulli sequence.  However, this simplicity is very deceptive:  to say anything more about the distribution we need to know what are the possible states that may occur, and we need to find a way to count them.  The next theorem gives a characterization of the valid states of $U$ and $W$.

\begin{theorem}[Detailed State Cahracterization]
\label{thm.characterization}
A state $U(N)=(u_1,\ldots,u_L)$ has a positive probability of occurring under the stationary distribution if  and only if it  is $\emptyset$, or it starts with an unmatched agent, it consists of elements $c_i,\tc_i,\ts_j$, and for any $k<l$ if $u_k=c_i$ and $u_l=\ts_j$ then $(s_j,c_i)\not\in \G$.  

Similarly, a state $W(N)=(w_1,\ldots,w_L)$ has a positive probability of occurring under the stationary distribution if and only if it is $\emptyset$, or it ends  with an exchanged agent, it consists of elements $c_i,\tc_i,s_j$, and for any $k>l$ if $w_k=\tc_i$ and $w_l=s_j$ then $(s_j,c_i)\not\in \G$.  
\end{theorem}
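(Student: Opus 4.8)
The plan is to prove the characterization for $U$ by establishing necessity and sufficiency separately, then obtain the statement for $W$ by the exchange-and-reverse symmetry of Proposition \ref{thm.same}. For necessity, I would argue directly from the construction of $U(N)$: a nonempty $U(N)$ by definition begins with $z^{\underline{N}}$, the first unmatched agent, so $u_1$ is an unmatched agent; its entries are exactly of types $c_i,\tc_i,\ts_j$ by the five-case construction; and the forbidden pattern is ruled out by the FCFS rule. Specifically, suppose $u_k=c_i$ (an unmatched agent at position $\underline{N}+k-1$) and $u_l=\ts_j$ with $k<l$ and $(s_j,c_i)\in\G$. The entry $\ts_j$ arises either because a good $s_j$ at position $\underline{N}+l-1$ went unmatched, or because an agent there was matched and exchanged with a later good $s_j$; in both cases an actual $s_j$-good sits in the sequence at a position strictly later than the unmatched compatible agent $c_i$. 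Under directed FCFS that good would have matched $c_i$ (or an earlier compatible unmatched agent), contradicting that $c_i$ remained unmatched. This is the crux of necessity and mirrors the incompatibility logic already used in Cases (iii)--(iv) of the proof of Theorem \ref{thm.detailed}.

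For sufficiency I would show that any sequence $(u_1,\ldots,u_L)$ meeting the three conditions is reachable with positive probability, most cleanly by exhibiting an i.i.d.\ realization of $(z^n)$ that produces it. The idea is to read the state as a recipe: place an actual agent $c_i$ wherever $u_m=c_i$; place an actual good $s_j$ wherever $u_m=\ts_j$; and for each $u_m=\tc_i$, place a good in that slot together with a matched compatible agent $c_i$ inserted just before the block $u_1,\ldots,u_L$ (in an order consistent with FCFS). The non-crossing condition ``$u_k=c_i,\ u_l=\ts_j,\ k<l \Rightarrow (s_j,c_i)\notin\G$'' is exactly what guarantees that when directed FCFS matching is run on this realization up to time $N$, the unmatched agents are precisely those in positions marked $c_i$ and the exchanged entries land where prescribed, so that the resulting $U(N)$ equals $(u_1,\ldots,u_L)$. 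Since any finite i.i.d.\ prefix has positive probability and the chain is ergodic under (\ref{eqn.stability}) by Theorem \ref{thm.stability}, positivity under the stationary distribution follows.

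Finally, the statement for $W$ follows without a separate combinatorial argument. By Proposition \ref{thm.same}, $W(N)$ ranges over exactly the reversed-and-exchanged images $\atu$ of admissible $U$-states, and the two chains share the same stationary distribution by Theorem \ref{thm.detailed}. Under $u\mapsto\atu$ the first entry $u_1=c_i$ (unmatched agent) maps to the last entry $\tc_i$ (exchanged agent), the entries $\ts_j$ map to $s_j$, and a forbidden pair $u_k=c_i,\ u_l=\ts_j$ with $k<l$ maps to $w_{k'}=s_j,\ w_{l'}=\tc_i$ with $k'>l'$; translating the $U$-condition through this bijection yields verbatim the stated $W$-condition.

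\emph{Main obstacle.} The delicate step is sufficiency: verifying that the constructed i.i.d.\ realization really reproduces the prescribed state when FCFS is applied, in particular ordering the prepended matched agents so that each $\tc_i$-slot's partner is matched to the correct good and no spurious matches occur. I expect the non-crossing condition to be exactly the invariant that makes this work, but pinning down the insertion order and checking that it respects directed FCFS for every $\tc_i$-entry simultaneously is where the real care is needed.
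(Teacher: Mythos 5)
Your necessity argument and your reduction of the $W$-statement to the $U$-statement via Proposition \ref{thm.same} both match the paper's proof and are fine. The gap is in sufficiency, and it is not merely the ordering issue you flag at the end: the recipe itself is structurally wrong for a class of admissible states. You realize every $\ts_j$-entry as an actual unmatched good sitting in that slot, and every $\tc_i$-entry as a good in that slot plus a compatible agent $c_i$ prepended before the block. But an admissible state may contain $u_l=\ts_j$ and $u_k=\tc_i$ with $l<k$ and $(s_j,c_i)\in\G$ --- the theorem's non-crossing condition only constrains \emph{unmatched} agents against later $\ts_j$'s, so it does not exclude this. Concretely, take $u=(c_1,\ts_a,\tc_2)$ with $(s_a,c_1)\not\in\G$ and $(s_a,c_2)\in\G$: this satisfies all three conditions. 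Your realization is $(c_2,c_1,s_a,s)$ with $s\in\S(c_2)$; running directed FCFS, the good $s_a$ at slot $2$ finds the prepended $c_2$ unmatched and compatible and matches it, so slot $2$ ends up as $\tc_2$ rather than $\ts_a$ and the resulting state is not $u$. No reordering of the prepended agents can fix this, because the only way to realize this state is to place an \emph{agent} $c_2$ at slot $2$ and a good $s_a$ at slot $3$, matched to each other inside the block.

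This is exactly what the paper's construction handles and yours omits: before deciding which $\tc$'s need prepended partners and which $\ts$'s are genuine lost goods, the paper runs a reversed-time FCFS matching of the $\tc_i$'s to earlier compatible $\ts_j$'s \emph{within} $u$ (its step (ii)), exchanging each matched pair back into an agent at the earlier slot and a good at the later slot. Only the $\tc$'s left over from this internal matching get a compatible good in their slot and a partner agent prepended (in the order of their slots), and only the leftover $\ts$'s are realized as unmatched goods. The subsequent verification --- that the prepended agents match precisely the goods in the leftover $\tc$-slots, and that the internally paired agents and goods re-match with each other rather than crossing --- uses the FCFS-in-reversed-time property of step (ii) in an essential way. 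Without that internal matching step your construction cannot reach all admissible states, so sufficiency is not established.
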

\begin{proof}
We prove the theorem for $U$.  The result for $W$ follows from the reversibility and Proposition \ref{thm.same}.  

The necessity is easy to see.  By definition $(u_1,\ldots,u_L)$ consists only of  elements $c_i,\tc_i,\ts_j$.  Assume to the contrary, that  for some $k<l$ $u_k=c_i$ and $u_l=\ts_j\in \S(c_i)$.  If  $u_l=\ts_j$ then either $z^{\underline{N} + l -1} = s_j$ and $z^{\underline{N} + l -1}$ remained unmatched and was exchanged with itself, or   
$z^{\underline{N} + l -1} = c_{i'}$ and it was matched and exchanged with $z^{\underline{N} + m -1}=s_j$, for some $l<m\le N$.  But in either of these cases $z^{\underline{N} + k -1}=c_i$ should have been matched and exchanged with $s_j$, which is a contradiction.

More subtle is the proof that any $(u_1,\ldots,u_L)$ that satisfies the conditions can occur with positive probability as $U(N)$.   For such $u=(u_1,\ldots,u_L)$, we construct $\tu=(\tu_1,\ldots,\tu_L)$, and $w$ as follows:
\begin{compactenum}[(i)]
\item
Leave all  unmatched elements $u_m=c_i$ in place, i.e. let  $\tu_m=c_i$, this includes $\tu_1$ since $u_1$ is the first unmatched agent.
\item
Perform directed FCFS matching in reversed time on the elements $\tc_i$ and $\ts_j$ in $u$, and exchange their places, so that if $u_k= \tc_i$ is matched with $u_l=\ts_j$ (where $k>l$) we let  $\tu_k=s_j$ and $\tu_l=c_i$.   
\item
Leave all the remaining elements $u_m=\ts_j$ (if there are any) in place and exchange them, i.e.  let  $\tu_m=s_j$.
\item
Consider the remaining elements $u_{m_1}=\tc_{i_1},\ldots,u_{m_K}=\tc_{i_K}$ (if there are any), where $m_1<m_2\cdots < m_K$.
\item
For each $u_{m_k}=\tc_{i_k}$ let $\tu_{m_k}=s_{j_k}$ where you choose $s_{j_k}\in\S(c_{i_k})$.
\item
Append $(c_{i_1},\ldots,c_{i_K})$ to the constructed $(\tu_1,\ldots,\tu_L)$, to obtain
 $w=((c_{i_1},\ldots,c_{i_K},\tu_1,\ldots,\tu_L)$.
\end{compactenum}
Note that all elements $\tu_l,\,l=1,\ldots,L$ are well defined, and note that all elements of $w$ are either $c_i\in \C$ or $s_j\in \S$.

Assume that $U(N-K-L)=\emptyset$, and let $z^{N-K-L+k}=w_k$, $k=1,\ldots,L+K$.  
It is important to note that there is a positive probability for this event.  We now claim that $U(N)=u$.  We will  go through the steps of moving from 
$U(N-K-L)$ to $U(N)$, using directed FCFS, and matching and exchanging agent-good pairs.  

We  look first for matches for  $c_{i_1},\ldots,c_{i_K}$.   We start with $c_{i_1}$, the first element of $w$, i.e. $c_{i_1}=z^{\underline{N}-K-L+1}$.  We claim that it will be matched and exchanged with $\tu_{m_1}=s_{j_1}$.  By construction, $(c_{i_1},s_{j_1})$ are compatible, and $\tu_{m_1}=z^{\underline{N}-L+m_1}$ precedes all the other $\tu_{m_k}$ in the sequence $z^n$.  We need to show that if there is a 
$\tu_l=z^{\underline{N}-L+l} = s_j$, where $l<m_1$, then it must be incompatible with $c_{i_1}$.   
Assume to the contrary that such $\tu_l=z^{\underline{N}-L+l} = s_j$ with $l<m_1$ is compatible with $c_{i_1}$.  We examine $\tu_l=s_j$.  By construction of $\tu$, there are two possibilities:  
either for some $l'<l$, $u_{l'}=\ts_j$ was matched and exchanges with $u_{l}=\tc_{i}$ in step (ii) of the construction of $\tu$, or  else  $u_l=\ts_j$ was not matched in step (ii) and so was left in place, as $u_l=s_j$ in step (iii) of the construction.   The two possibilities are illustrated in the following line:
\[
\begin{array}{cccccccc}
&1&  & l & & m_1 & & L \\
\tu =&   (\tu_1,&\ldots&s_j,&\ldots,&s_{j_1},&\ldots&\tu_L) \\
u = &  (u_1,&\ldots&\ts_j,&\ldots,&\tc_{i_1},&\ldots&u_L) \\
  \end{array}
\quad , \qquad
\begin{array}{ccccccccccc}
&1& & l'  &  & l & & m_1 & & L \\
\tu =&   (\tu_1,&\ldots&c_i,&\ldots,&s_j,&\ldots,&s_{j_1},&\ldots&\tu_L) \\
u = &  (u_1,&\ldots&\ts_j,&\ldots,&\tc_i,&\ldots,&\tc_{i_1},&\ldots&u_L) \\
  \end{array}  
\]
We now get a contradiction.  Recall that we assume the $u_{m_1}=\tc_{i_1}$ was left unmatched in step (ii) and reached step (iv).  If $u_l=\ts_j$ was not exchanged in step (ii), then it would remain compatible with $u_{m_1}=\tc_{i_1}$, and should have been matched and exchanged with it in step (ii).   If $u_{l'}=\ts_j$ where $l'<l$ was matched and exchanged with $u_l=\tc_i$, in step (ii), then $u_{l'}$ should have actually have been exchanged with $u_{m_1}=\tc_{i_1}$ and not with $u_l=\tc_i$.

We  proceed by induction: if $w_1=c_{i_1},\ldots,w_{k-1}=c_{i_{k-1}}$ have been matched and exchanged with $\tu_{m_1}=s_{j_1},\ldots, \tu_{m_{k-1}}=s_{j_{k-1}}$, then $w_k=\tc_{i_k}$ cannot be matched to any of  $\tu_{m_1}=s_{j_1},\ldots, \tu_{m_{k-1}}=s_{j_{k-1}}$, and by the same argument as for $w_1$ it cannot be matched to any $\tu_l=s_j$ with $l<m_k$.  Hence it will be matched and exchanged with $\tu_{m_k}=s_{j_k}$.

We have then shown that by the time we reach $U(N)$, the first remaining unmatched agent in $\tu_1=u_1$, and all the elements in positions $m_1,\ldots,m_K$ are now occupied by $\tc_{i_1},\ldots,\tc_{i_K}$.   We note that  in all the steps of the construction of $\tu$ we were converting $\ts_j$ to $s_j$, but never changing the types of these goods.  By the assumption of the theorem, all $u_l=\ts_j$ are of types incompatible with all unmatched $u_k=c_i$, if $k<l$.   Therefore in step (i) all unmatched $u_l=c_i$ will  now be $\tu_l=c_i$, and will remain unmatched in $U(N)$.   So $U(N)$ coincides with $u$ in positions $m_1,\ldots,m_L$, and in those positions of $u$ that had elements $c_i$.  

The remaining positions in $u$ and $\tu$  are occupied by the agent-good pairs exchanged in step (ii) and the left over goods of step (iii).  We cIaim that if $u_k=\tc_i$ and $u_l=\ts_j$, $k>l$ were exchanged to $\tu_l=c_i$ and $\tu_k=s_j$ in step (ii), then in the directed FCFS matching leading to $U(N)$, the agent $\tu_l=c_i$ will be matched and exchanged with $\tu_k=s_j$ back to their original positions.   Assume there is a $\tu_m=s_{j'}$, such that $l<m<k$ and $(s_{j'},c_i)\in \G$. 
If $\tu_m=s_{j'}$ resulted from an exchange with $\tu_{m'}=c_{i'}$, then we must have had $l<m<k<m'$, since step (ii) did matching in reversed FCFS, and so $\tu_l=c_i$ did not match with  $\tu_m=s_{j'}$.  If $\tu_m=s_{j'}$ was obtained in step (iii) from $u_m=\ts_{j'}$ this would imply that it should have matched in step (ii) with $u_k=\tc_i$, which is a contradiction.
Hence we have the in $U(N)$ all pairs exchanged from their positions in $u$ in step (ii), would be exchanged back to their original positions in $U(N)$.  All that remains are goods remaining unmatched in step (iii) which will retain their positions in $u$ as in $\tu$ and in $U(N)$.
This completes the proof that $U(N)=u$.
\qed  \end{proof}


\section{A Concise Markov Chain, the Normalizing Constant, and a Sanity Check} 
\label{sec.constant}

The  Markov chain $U(N)$, and its reversal partner $W(N)$, contain  very detailed  information about  the evolution of the directed FCFS matching.  Much of this information is irrelevant, in particular the $\tc$ items and $\ts$ items in $U(N)$  refer to the past of the matching process, and have no influence on future matching. The only relevant part for the future is the ordered list of unmatched agents, which is given by the Markov chain $X(N)$,  introduced in \cite{adan-kleiner-righter-weiss:18}, where its stationary distribution (Theorem \ref{thm.stability}, equation (\ref{eqn.stationaryX})) is obtained using a partial balance argument from \cite{gardner-etal:16}.  We have briefly summarized results on $X(N)$ in Section \ref{sec.previous}.   
We note that the stationary distribution (\ref{eqn.stationaryX})) of $X(N)$ can be obtained directly from $U(N)$.  Later in this section (sub-section \ref{sec.sanity}) we illustrate such a derivation using $U(N)$.

The transitions of $X(N)$ are very simple:  given $X(N)=(c^1,\ldots,c^L)$ and $z^{N+1}$ the state $X(N+1)$ is determined uniquely.   On the other hand, the states are given by finite words of arbitrary length, which is awkward for calculations.  We next define a more concise Markov chain with simpler, finite dimensional states.

We define the Markov chain $Y(N)_{N\in\mathbb{Z}}$ whose states are of the form $Y(N)=y=(C_1,n_1,\ldots,C_k,n_k)$, where $k\le I$, $(C_1,\ldots,C_k)$ is a permutation of $C \subseteq \C$,  and $n_j$ are positive integers.  In pariticular, for $k=0$ the state is denoted as $Y(N)=\emptyset$ or $Y(N)=0$.
The states of $Y(N)_{N\in\mathbb{Z}}$  are derived from the  states of $X(N)_{N\in\mathbb{Z}}$ as follows:   If $X(N)=\emptyset$ then also $Y(N)=\emptyset$.
If $X(N)=(c^1,\ldots,c^L)$  then  $C_1,\ldots,C_k$ are the types of agents appearing in $x$, in the order in which they appear, and $n_j$ is the number of terms in $x$ between the first appearance of $C_j$ and the first appearance of $C_{j+1}$, for $j=1,\ldots,k-1$, and $n_k$  is the number of terms in $x$ from the first appearance of  $C_k$ up to the last term $c^L$.  The number of unmatched agents for $Y(N)=(C_1,n_1,\ldots,C_k,n_k)$ is $k+\sum_{j=1}^k n_j$.  Note that now, given $Y(N)$ and $z^{N+1}$ we can determine the next match, but the next state is not deterministic, several possible states $Y(N+1)$ may result.  This Markov chain is similar to a Markov chain used in \cite{visschers-adan-weiss:12,adan-weiss:14}.

\begin{theorem}
\label{thm.succint}
 If the stability condition (\ref{eqn.stability}) holds then the stationary distribution of $Y(N)_{N\in\mathbb{Z}}$  is given by:
 \begin{equation}
 \label{eqn.stationaryY}
 \pi_Y(C_1,n_1,\ldots,C_k,n_k) = 
B   \prod_{\ell=1}^k  \frac{\lambda_{C_\ell}}{\mu_{\S(\{C_1,\ldots,C_\ell\})}}
\left( \frac{\lambda_{\{C_1,\ldots,C_\ell\}}}{\mu_{\S(\{C_1,\ldots,C_\ell\})}} \right)^{n_\ell}
\end{equation}
where $B$ is a normalizing constant, which is also equal to the probability of a perfect match, i.e. $\pi_Y(\emptyset)=B$.
The value of $B$ is given by (\ref{eqn.constant}).
\end{theorem}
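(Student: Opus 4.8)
The plan is to obtain $\pi_Y$ as the image (pushforward) of the already-known stationary distribution $\pi_X$ under the lumping map that compresses a state $X(N)=(c^1,\ldots,c^L)$ into its concise description $Y(N)=(C_1,n_1,\ldots,C_k,n_k)$. By construction $Y(N)$ is a deterministic function $\phi$ of $X(N)$, and under the stability condition (\ref{eqn.stability}) the chain $X(N)$ is stationary with distribution $\pi_X$ given by (\ref{eqn.stationaryX}) (Theorem \ref{thm.stability}). Hence the process $Y(N)=\phi(X(N))$ is stationary, and—being ergodic as the image of the ergodic $X(N)$—its marginal law is the unique stationary distribution of the chain $Y$, namely
\[
\pi_Y(y)=\sum_{x:\,\phi(x)=y}\pi_X(x).
\]
So the entire proof reduces to summing the product form (\ref{eqn.stationaryX}) over the fiber $\phi^{-1}(y)$.

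First I would describe the fiber explicitly. A word $x=(c^1,\ldots,c^L)$ maps to $y=(C_1,n_1,\ldots,C_k,n_k)$ exactly when its distinct types, listed in order of first appearance, are $C_1,\ldots,C_k$, with precisely $n_j$ further entries strictly between the first appearances of $C_j$ and $C_{j+1}$ (and $n_k$ entries after the first appearance of $C_k$). This partitions $x$ into blocks, block $j$ consisting of the first appearance of $C_j$ followed by its $n_j$ ``filler'' entries. The key structural point is that a filler in block $j$ must be an already-seen type: it cannot be a new type (that would be the first appearance of $C_{j+1}$), so it ranges freely over $\{C_1,\ldots,C_j\}$. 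Conversely every independent choice of filler types from these sets yields a valid word in the fiber, and since every finite agent word is a positive-probability state of $X$, all such $x$ contribute and none is double counted.

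The central observation is that the denominator in (\ref{eqn.stationaryX}) depends only on the \emph{set} of types seen up to position $\ell$, not on their order or multiplicities. Throughout block $j$ this set is exactly $\{C_1,\ldots,C_j\}$, so every factor in block $j$ carries the same denominator $\mu_{\S(\{C_1,\ldots,C_j\})}$. Thus $\pi_X(x)$ factorizes over blocks: the first-appearance entry of block $j$ contributes $\lambda_{C_j}/\mu_{\S(\{C_1,\ldots,C_j\})}$, while a filler of type $C_m$ ($m\le j$) contributes $\lambda_{C_m}/\mu_{\S(\{C_1,\ldots,C_j\})}$. Summing over the fiber factorizes as well, since the $n_j$ filler slots in block $j$ are chosen independently; each slot contributes
\[
\sum_{m=1}^{j}\frac{\lambda_{C_m}}{\mu_{\S(\{C_1,\ldots,C_j\})}}=\frac{\lambda_{\{C_1,\ldots,C_j\}}}{\mu_{\S(\{C_1,\ldots,C_j\})}},
\]
so block $j$ altogether yields $\frac{\lambda_{C_j}}{\mu_{\S(\{C_1,\ldots,C_j\})}}\big(\frac{\lambda_{\{C_1,\ldots,C_j\}}}{\mu_{\S(\{C_1,\ldots,C_j\})}}\big)^{n_j}$. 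Multiplying over $j=1,\ldots,k$ reproduces exactly (\ref{eqn.stationaryY}), with the same constant $B$ inherited from (\ref{eqn.stationaryX}); since only $x=\emptyset$ maps to $y=\emptyset$ we get $\pi_Y(\emptyset)=\pi_X(\emptyset)=B$, and the explicit value (\ref{eqn.constant}) then follows from normalizing $\sum_y\pi_Y(y)=1$, the computation carried out in the rest of the section.

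The step I expect to demand the most care is the fiber description: one must verify both that the fillers in block $j$ are confined to the already-seen types $\{C_1,\ldots,C_j\}$ and that, conversely, every such assignment is admissible and occurs with positive probability, so the fiber is precisely the product of these choice sets. Once this bookkeeping is pinned down, the ``constant denominator per block'' property turns the sum into a clean product and the remainder is routine.
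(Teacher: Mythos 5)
Your proposal is correct and follows essentially the same route as the paper: both compute $\pi_Y(y)$ by summing the product form $\pi_X$ of Theorem \ref{thm.stability} over all words $(c^1,\ldots,c^L)$ whose first-appearance order is $C_1,\ldots,C_k$ with $n_\ell$ fillers from $\{C_1,\ldots,C_\ell\}$ in block $\ell$, exploiting the fact that the denominator $\mu_{\S(\{C_1,\ldots,C_\ell\})}$ is constant within a block. The paper collapses the filler sum with the multinomial theorem while you do it slot-by-slot, which is the same computation, and it additionally verifies $\pi_X(\emptyset)=B$ via a one-step balance equation where you invoke the empty-product convention; neither difference is substantive.
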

\begin{proof}
We note that  $Y(N)=(C_1,n_1,\ldots,C_k,n_k)$ includes all states $X(N)=(c^1,\ldots, c^L)$ where $L=k+\sum_{\ell=1}^k n_\ell$, where $C_\ell,\,\ell=1,\ldots,k$ appear at least once, in that order,
and where between the appearance of $C_\ell$ and $C_{\ell+1}$ there are any combinations of $n_\ell$ agents of types $C_1,\ldots,C_\ell$, hence by (\ref{eqn.stationaryX}):
\begin{eqnarray*}
 \pi_Y(C_1,n_1,\ldots,C_k,n_k) &=& B
 \prod_{\ell=1}^k  \frac{\lambda_{C_\ell}}{\mu_{\S(\{C_1,\ldots,C_\ell\})}}
 \left( \sum_{i_1+\cdots+i_\ell=n_\ell}  \frac{n_\ell !}{ i_1 ! \cdots i_\ell !} 
 \prod_{j=1}^\ell\left( \frac{\lambda_{C_j}}{\mu_{\S(\{C_1,\ldots,C_\ell\})}}\right)^{i_j} 
 \right)  \\
 &=&B
 \prod_{\ell=1}^k  \frac{\lambda_{C_\ell}}{\mu_{\S(\{C_1,\ldots,C_\ell\})}}
 \left( \frac{\lambda_{C_1}+\cdots+\lambda_{C_\ell}}{\mu_{\S(\{C_1,\ldots,C_\ell\})}}\right)^{n_\ell} 
\end{eqnarray*} 

We also get the probability of a perfect match, $\emptyset$ from the balance equation:
\[
\pi_X(0) = \pi_X(0) \frac{\rmu}{\rlambda+\rmu} + \sum_{i=1}^I \pi_X(c_i)\frac{\mu_{\S(c_i)}}{\rlambda+\rmu}
\]
\[
\pi_X(0) \rlambda =  \sum_{i=1}^I \pi_X(c_i)\mu_{\S(c_i)} = 
B \sum_{i=1}^I \frac{\lambda_{c_i}}{\mu_{\S(c_i)}} \mu_{\S(c_i)} = B \rlambda,
\]
so we get $\pi_X(\emptyset) = B$, and from this also $\pi_Y(\emptyset) = B$.  
\qed  \end{proof}

We also obtain  the stationary probabilities of observing a state in which $(C_1,\ldots,C_k)$ appear for the first time  in that order, which we denote as $(C_1,\cdot,\ldots,C_k,\cdot)$: 
\begin{eqnarray}
\label{eqn.stationaryYperm}
\pi_Y(C_1,\cdot,\ldots,C_k,\cdot) &=& 
B \prod_{\ell=1}^k  \frac{\lambda_{C_\ell}}{\mu_{\S(\{C_1,\ldots,C_\ell\})}}
\left[ \sum_{n_\ell= 0}^\infty  \left( \frac{\lambda_{\{C_1,\ldots,C_\ell\}} }{ \mu_{\S(\{C_1,\ldots,C_\ell\})} } \right)^{n_\ell} \right]  \nonumber \\
&=&  B   \prod_{\ell=1}^k  \frac{\lambda_{C_\ell}}{\mu_{\S(\{C_1,\ldots,C_\ell\})}}
\frac{ \mu_{\S(\{C_1,\ldots,C_\ell\})} }{\mu_{\S(\{C_1,\ldots,C_\ell\})} - \lambda_{\{C_1,\ldots,C_\ell\}} } \\
&=&  B   \prod_{\ell=1}^k  
\frac{\lambda_{C_\ell} }{\mu_{\S(\{C_1,\ldots,C_\ell\})} - \lambda_{\{C_1,\ldots,C_\ell\}} }
\nonumber
\end{eqnarray}
Together with $\pi_Y(\emptyset) = B$, this gives us the stationary distribution of the 
unmatched types in $X(N)$ or in $U(N)$, in the order in which they appear for the first time.
These probabilities play a central role in the calculation of matching rates (Section \ref{sec.matchingrates}) and in deriving the distribution of delays (Section \ref{sec.delays}).

We now calculate the normalizing constant, $B=\pi_Y(0)$.
  We need to sum over all subsets of $\C$, and for each subset $C\subseteq \C$ we need to sum over all the permutations of $C$,  
(altogeter the summation has $f(I)=\sum_{k=0}^I  \frac{I!}{(I-k)!}$ terms, e.g. $f(0),f(1),\ldots,f(10)=1, 2, 5, 16, 65, 326, 1957, 13700, 109601, 986410, 9864101$,   we note that $n! < f(n) < (n+1)!$).
\begin{equation}
\label{eqn.constant}
B = \left(\sum_{k=0}^I  \quad \sum_{C\subseteq\C,\,|C|=k}\quad \sum_{(C_1,\ldots,C_k) \in \P(C)}  \quad \prod_{\ell=1}^k \quad
\frac{\lambda_{C_\ell}}{\mu_{\S(\{C_1,\ldots,C_\ell\})} - \lambda_{\{C_1,\ldots,C_\ell\}} }
  \right)^{-1}
\end{equation}
where the term for $k=0$ equals 1.
We strongly suspect that calculation of $B$ is $\sharp$-P complete (this is similar to the conjecture that the normalizing constant in symmetric  FCFS infinite bipartite matching is $\sharp$-P complete, made in \cite{adan-weiss:12}).

\subsection{Deriving the expression for $\pi_Y(C_1,\cdot,\ldots,C_k,\cdot)$ directly from Theorems \ref{thm.detailed},  \ref{thm.characterization}.}
\label{sec.sanity}
We first calculate $\pi_Y(C_1,\cdot)$, by summing $\pi_U(u)$ over all admissible $u$ that contain only unmatched agents of type $C_1$.  A typical state of this form will be:
\[
u = (C_1,\ldots,\tc,\tc,\ldots, s_j\in\overline{\S(C_1)},\ldots,\tc,\tc,\ldots, \ldots C_1 ,\ldots, \ldots 
C_1,\ldots,\tc,\tc,\ldots, s_j\in\overline{\S(C_1)},\ldots,\tc,\tc,\ldots)
\]
where $\tc$ can be any exchanged agents, $s_j$ must all be  goods incompatible with $C_1$, and 
there are $n\ge 1$ $C_1$s, with $m \ge 0$ $s_j\in\overline{\S(C_1)}$ following each $C_1$, and $k\ge 0$ $\;\tc$s following each $C_1$ or $s_j$.

We therefore get by the expression (\ref{eqn.detailedstationary}) for $\pi_U(u)$:
\begin{eqnarray*}
\pi_Y(C_1,\cdot) &=& B\sum_{n=1}^\infty \left\{  \frac{\rlambda}{\rlambda+\rmu} \alpha_{C_1} 
\left(\sum_{k=0}^\infty \big( \frac{\rlambda}{\rlambda+\rmu}\big)^k \right) 
\left[\sum_{m=0}^\infty  \left(\frac{\rmu}{\rlambda+\rmu} \beta_{\overline{\S(C_1)}} \sum_{k=0}^\infty \big( \frac{\rlambda}{\rlambda+\rmu}\big)^k \right)^m \right]
\right\}^n \\
&=& B\sum_{n=1}^\infty  \left\{ \frac{\rlambda}{\rlambda+\rmu}  \alpha_{C_1} 
 \frac{1}{1- \frac{\rlambda}{\rlambda+\rmu}}
 \sum_{m=0}^\infty \left(\frac{\rmu}{\rlambda+\rmu}  \beta_{\overline{\S(C_1)}} 
 \frac{1}{1- \frac{\rlambda}{\rlambda+\rmu}} \right)^m
\right\}^n \\
&=& B\sum_{n=1}^\infty \left\{ \frac{\rlambda}{\rmu} \alpha_{C_1} 
 \sum_{m=0}^\infty   \left( 1 - \beta_{\S(C_1)}\right)^m
\right\}^n \\
&=& B\sum_{n=1}^\infty \left\{ \frac{\rlambda}{\rmu}  \frac{\alpha_{C_1}}{\beta_{\S(C_1)}}
\right\}^n   =  
B\sum_{n=1}^\infty \left\{  \frac{\lambda_{C_1}}{\mu_{\S(C_1)}}  
\right\}^n = B \frac{\lambda_{C_1}}{\mu_{\S(C_1)} - \lambda_{C_1}} 
\end{eqnarray*}
Next we calculate $\pi_Y(C_1,\cdot,C_2,\cdot)$.   We need to sum probabilities of typical states  of the form:
\[
u= (C_1,\ldots,C_2, \ldots, s_j\in\overline{\S(C_1,C_2)},\ldots,c_i \in\{C_1,C_2\},\ldots,
\]
where the first part up to the first $C_2$ is as a typical $(C_1\cdot)$ state, followed by the first $C_2$ and then any number of $\tc$, and any number of $c_i \in\{C_1,C_2\}$ and $s_j\in\overline{\S(C_1,C_2)}$, and we then get:
\begin{eqnarray*}
\pi_Y(C_1,\cdot,C_2,\cdot) &=& B \sum_{n=1}^\infty \left\{  \frac{\rlambda}{\rlambda+\rmu} \alpha_{C_1} 
\left(\sum_{k=0}^\infty \big( \frac{\rlambda}{\rlambda+\rmu}\big)^k \right) 
\left[\sum_{m=0}^\infty  \left(\frac{\rmu}{\rlambda+\rmu} \beta_{\overline{\S(C_1)}} \sum_{k=0}^\infty \big( \frac{\rlambda}{\rlambda+\rmu}\big)^k \right)^m \right]
\right\}^n \\
&& \times \frac{\rlambda}{\rlambda+\rmu} \alpha_{C_2} 
\left(\sum_{k=0}^\infty \big( \frac{\rlambda}{\rlambda+\rmu}\big)^k \right) 
\left[\sum_{m=0}^\infty  \left(\frac{\rmu}{\rlambda+\rmu} \beta_{\overline{\S(\{C_1,C_2\})}} \sum_{k=0}^\infty \big( \frac{\rlambda}{\rlambda+\rmu}\big)^k \right)^m \right] \\
&& \times \sum_{n=0}^\infty \left\{  \frac{\rlambda}{\rlambda+\rmu} \alpha_{\{C_1,C_2\}} 
\left(\sum_{k=0}^\infty \big( \frac{\rlambda}{\rlambda+\rmu}\big)^k \right) 
\left[\sum_{m=0}^\infty  \left(\frac{\rmu}{\rlambda+\rmu} \beta_{\overline{\S(\{C_1,C_2\})}} \sum_{k=0}^\infty \big( \frac{\rlambda}{\rlambda+\rmu}\big)^k \right)^m \right]
\right\}^n \\
&=& B  \frac{\lambda_{C_1}}{\mu_{\S(C_1)} - \lambda_{C_1}} \times
 \frac{\lambda_{C_2}}{\mu_{{\S(\{C_1,C_2\})}}}  \times
 \sum_{n=0}^\infty \left\{ \frac{ \lambda_{\{C_1,C_2\}} }{\mu_{\S(\{C_1,C_2\})}}  \right\}^n \\
 &=& B  \frac{\lambda_{C_1}}{\mu_{\S(C_1)} - \lambda_{C_1}} 
 \frac{ \lambda_{C_2} }{\mu_{\S(\{C_1,C_2\})} -  \lambda_{\{C_1,C_2\}}}. 
\end{eqnarray*}
Finally we proceed by induction, to obtain, using the same steps in the derivation:
\begin{eqnarray*}
&&\pi_Y(C_1,\cdot,\ldots,C_L,\cdot) = B \prod_{l=1}^{L-1}  \frac{ \lambda_{C_l} }{\mu_{\S(\{C_1,\ldots,C_l\})} -  \lambda_{\{C_1,\ldots,C_l\}}}  \\
&&\quad \times \frac{\rlambda}{\rlambda+\rmu} \alpha_{C_L} 
\left(\sum_{k=0}^\infty \big( \frac{\rlambda}{\rlambda+\rmu}\big)^k \right) 
\left[\sum_{m=0}^\infty  \left(\frac{\rmu}{\rlambda+\rmu} \beta_{\overline{\S(\{C_1,\ldots,C_L\})}} \sum_{k=0}^\infty \big( \frac{\rlambda}{\rlambda+\rmu}\big)^k \right)^m \right] \\
&&\quad \times \sum_{n=0}^\infty \left\{  \frac{\rlambda}{\rlambda+\rmu} \alpha_{\{C_1,\ldots,C_L\}} 
\left(\sum_{k=0}^\infty \big( \frac{\rlambda}{\rlambda+\rmu}\big)^k \right) 
\left[\sum_{m=0}^\infty  \left(\frac{\rmu}{\rlambda+\rmu} \beta_{\overline{\S(\{C_1,\ldots,C_L\})}} \sum_{k=0}^\infty \big( \frac{\rlambda}{\rlambda+\rmu}\big)^k \right)^m \right]
\right\}^n \\
&& =B  \prod_{l=1}^{L}  \frac{ \lambda_{C_l} }{\mu_{\S(\{C_1,\ldots,C_l\})} -  \lambda_{\{C_1,\ldots,C_l\}}} 
\end{eqnarray*}

\section{Calculation of the Matching Rates}
\label{sec.matchingrates}
\begin{definition}[Matching Rates]
\label{def.matchingrates}
In a stable FCFS directed bipartite matching we define matching rates as follows:  $r_{s_j,c_i}$ is the fraction of matches which are of goods type $s_j$  and agent type $c_i$, and $r_{s_j,\emptyset}$ is the fraction of goods which are of type $s_j$ and are not matched.
\end{definition}
To make things clear, $\sum_{s_j\in\S} \left(r_{s_j,\emptyset} + \sum_{c_i\in\C} r_{s_j,c_i}\right) = 1$.  we also have:  
\begin{compactitem}[-]
\item
For goods of type $s_j$, the fraction that will remain unmatched, and the fraction that will be matched with agents of type $c_i$ is given by:
\[
\eta_{s_j}(\emptyset) = \frac{r_{s_j,\emptyset}}{ \left(r_{s_j,\emptyset} + \sum_{c_i\in\C} r_{s_j,c_i}\right)},
\qquad
\eta_{s_j}(c_i) = \frac{r_{s_j,c_i}}{ \left(r_{s_j,\emptyset} + \sum_{c_i\in\C} r_{s_j,c_i}\right)}.
\]
\item
For agents of type $c_i$, the fraction that will be matched with goods of type $s_j$ is given by:
\[
\theta_{c_i}(s_j) =  \frac{r_{s_j,c_i}}{\sum_{s_j\in\S} r_{s_j,c_i}}
\]
\end{compactitem}
The total fraction of unmatched goods is, by considering total rates of all  agents and goods:
\[
r_\emptyset = \sum_{s_j\in \S} \frac{\mu_{s_j}}{\rmu} \eta_{s_j,\emptyset} = \frac{\rmu - \rlambda}{\rmu}
\]

\begin{theorem}
In a stable system the matching rates for $(s_j,c_i)\in \G$ are given by:
\begin{eqnarray}
\label{eqn.matchingrates1}
&& r_{s_j,c_i} = \frac{\mu_{s_j}}{\rmu}  
\sum_{k=1}^I  \quad \sum_{C\subseteq\C,\,|C|=k}\quad \sum_{(C_1,\ldots,C_k) \in \P(C)} 
\pi_Y(C_1,\cdot,\ldots,C_k,\cdot) \\
&&\quad \times  \Big(1(C_1=c_i)\vee 1(\{C_1\}\cap\C(s_j)=\emptyset \wedge C_2=c_i) \vee 
\cdots 1(\{C_1,\ldots,C_{k-1}\}\cap\C(s_j)=\emptyset \wedge C_k=c_i)\Big) 
\nonumber
\end{eqnarray}
The rates of unmatched goods are given by:
\begin{eqnarray}
\label{eqn.matchingrates2}
&& r_{s_j,\emptyset} = \frac{\mu_{s_j}}{\rmu}  
\sum_{k=1}^I  \quad \sum_{C\subseteq\C,\,|C|=k}\quad \sum_{(C_1,\ldots,C_k) \in \P(C)} 
\pi_Y(C_1,\cdot,\ldots,C_k,\cdot) \\
&&\quad \times   \Big( 1(\{C_1,\ldots,C_k\}\cap\C(s_j)=\emptyset) 
\Big)
\end{eqnarray}
\end{theorem}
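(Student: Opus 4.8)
The plan is to read $r_{s_j,c_i}$ as a long-run frequency and convert it into a stationary conditional probability for the chain $Y(N)$. By the ergodic theorem applied to the stationary chain, $r_{s_j,c_i}$ equals the probability that a tagged good is of type $s_j$ and is matched to an agent of type $c_i$. Conditioning on the identity of the arriving item, I would write $r_{s_j,c_i}=\frac{\mu_{s_j}}{\rmu}\,P(\text{an arriving } s_j\text{-good matches a }c_i\text{-agent})$, where the factor $\frac{\mu_{s_j}}{\rmu}$ is the fraction of goods that are of type $s_j$, obtained from $P(z^{N+1}=s_j)=\frac{\mu_{s_j}}{\rlambda+\rmu}$ divided by the fraction $\frac{\rmu}{\rlambda+\rmu}$ of items that are goods. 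The observation that legitimizes the conditioning is that the arriving item $z^{N+1}$ is independent of the past and hence of the pre-arrival state $Y(N)$; thus in stationarity the state seen by the arriving good is distributed as $\pi_Y$, and conditioning on $z^{N+1}=s_j$ leaves this distribution unchanged (a PASTA-type argument).

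The central structural step is to describe exactly which agent an arriving $s_j$-good matches, purely as a function of $Y(N)$. Under FCFS the good is matched to the earliest (longest-waiting) unmatched agent whose type lies in $\C(s_j)$. I would show that when $Y(N)=(C_1,\cdot,\ldots,C_k,\cdot)$ records the unmatched types in order of first appearance, this earliest compatible agent is precisely the first appearance of $C_m$, where $m=\min\{\ell:C_\ell\in\C(s_j)\}$: every agent preceding the first appearance of $C_m$ has a type among $C_1,\ldots,C_{m-1}$, all incompatible with $s_j$ by minimality of $m$, whereas the first appearance of $C_m$ is compatible. Consequently the matched type depends only on the ordered list of distinct types and not on the multiplicities $n_\ell$, which is exactly what justifies collapsing $X(N)$ onto $Y(N)$. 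It follows that an $s_j$-good matches a $c_i$-agent (and since $(s_j,c_i)\in\G$ we have $c_i\in\C(s_j)$ automatically) iff some $C_m=c_i$ with $\{C_1,\ldots,C_{m-1}\}\cap\C(s_j)=\emptyset$, which is precisely the disjunction of indicators in (\ref{eqn.matchingrates1}); similarly the good is lost iff no compatible type is present, i.e. $\{C_1,\ldots,C_k\}\cap\C(s_j)=\emptyset$.

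Combining the two steps, I would expand $P(\text{an arriving }s_j\text{-good matches a }c_i\text{-agent})$ by the law of total probability over the stationary configurations, weighting each ordered type-tuple $(C_1,\ldots,C_k)$ by $\pi_Y(C_1,\cdot,\ldots,C_k,\cdot)$ from (\ref{eqn.stationaryYperm}) and inserting the matching indicator established above; summing over $k$, over $C\subseteq\C$ with $|C|=k$, and over $(C_1,\ldots,C_k)\in\P(C)$ then gives (\ref{eqn.matchingrates1}). The identical expansion with the indicator $1(\{C_1,\ldots,C_k\}\cap\C(s_j)=\emptyset)$ yields the loss rate (\ref{eqn.matchingrates2}), where the empty state $Y=\emptyset$ also contributes $\pi_Y(\emptyset)=B$ since a good is lost whenever no agent is waiting; the normalization $\sum_{s_j}(r_{s_j,\emptyset}+\sum_{c_i}r_{s_j,c_i})=1$ and the aggregate identity $r_\emptyset=\frac{\rmu-\rlambda}{\rmu}$ serve as consistency checks. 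I expect the only genuinely delicate point to be the structural step of the second paragraph, namely establishing that FCFS selects the first-appearing compatible type so that the matched type is measurable with respect to $Y(N)$; once that reduction is in hand, everything else is bookkeeping.
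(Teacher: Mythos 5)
Your proposal is correct and follows essentially the same route as the paper: a discrete-time PASTA argument showing the arriving good sees the stationary distribution of $Y(N)$, the observation that the matched type is determined by the ordered list of first appearances $(C_1,\cdot,\ldots,C_k,\cdot)$ alone (the good matches the first-listed compatible type, or is lost if none is compatible), and then summation of $\pi_Y$ over the relevant configurations weighted by $\frac{\mu_{s_j}}{\rmu}$. If anything, you supply slightly more detail than the paper does on the structural step — justifying that every agent preceding the first appearance of $C_m$ has a type among $C_1,\ldots,C_{m-1}$ — where the paper simply asserts that the match is determined unequivocally by the state.
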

\begin{proof}
We consider the stationary version of $Y(N)$.  By discrete time version of PASTA, at the moment that $z_{N+1}\in \S$ the state at time $N$ is the stationary state of the process.  We wish to find for every $z_{N+1}\in \S$ what match it will find, or remain unmatched, and calculate the probability of each outcome.  These probabilities will be the matching rates.

Assume $z_{N+1} = s_j$, then the type of match for   
for $z_{N+1}$ is determined unequivocally by the state $C_1,\cdot,\ldots,C_k,\cdot$.   In particular the match will be with $c_i$ if and only if for some $1\le l \le k$, $C_l=c_i$, and all of $C_1,\ldots,C_{l-1}$ are not compatible with $s_j$.  If all of $C_1,\ldots,C_k$ are incompatible with $s_j$, then $z^{N+1}$ will remain unmatched.

Hence to obtain $r_{s_j,c_i}$ we need to add up all the stationary probabilities of states $(C_1,\cdot,\ldots,C_k,\cdot)$ for which  $s_j$ will match with $c_i$, and multiply this sum by  $\frac{\mu_{s_j}}{\rmu} $, which is  $P(z_{N+1}=s_j\,|\,z_{N+1}\in\S)$.  Similarly for $r_{s_j,\emptyset}$ we add the probabilities for $(C_1,\cdot,\ldots,C_k,\cdot)$ for which  $s_j$ remains unmatched and multiply by $\frac{\mu_{s_j}}{\rmu} $.
\qed  \end{proof}

As already stated in several previous papers, it is very likely that the calculation of $B$ is $\sharp$-P complete.  
Calculation of $B$ directly from (\ref{eqn.constant}) requires adding up $\sum_j j! {n\choose j}$ terms, which is still quite feasible for small $|\C|=I$.
We note that the terms added to calculate $B$ are also the terms needed to add for calculating the matching rates.  We formulate the following algorithm for calculation of the matching rates:

\begin{framed}
\begin{center}
algorithm 1:  Calculation of normalizing constant and matching rates
\end{center}
\begin{compactenum}
\item
Initialize:  $B:=1$; $r_{s_j,c_i}:=0$; 
\item
For $A \subseteq \C$, $A\ne \emptyset$:

For $(C_1,\ldots,C_{|A|}) \in \mbox{Permutations}(A)$:
\item  
\hspace{0.1in} $X=1$; $C_{set}:=\emptyset$; $S_{set}:=\emptyset$; $\lambda_{set}:=0$; 
$\mu_{set}:=0$;
   $\; \mbox{Match}(s_j) := \emptyset$.
\item
\hspace{0.1in} For $l=1,|A|$:
\item
\hspace{0.2in} $C_{set}:= C_{set} \cup C_l$;  
$\Delta S_{set}:=\S(C_l)\setminus S_{set}$;
$S_{set}:=S_{set} \cup \S(C_l)$;

\hspace{0.2in}  $\lambda=\lambda_{C_l}$;  $\lambda_{set}:=\lambda_{set}+\lambda$;
$\mu_{set}:=\mu_{set}+\mu_{\Delta S_{set}}$;  

\hspace{0.2in}  $R_l:= \frac{1}{ \mu_{set} - \lambda_{set} }$;  $X:=X \lambda R_l$.

\item
\hspace{0.1in} $B:=B+X$.
\item
\hspace{0.1in} For $l=1,|A|$:

\hspace{0.1in} For $s_j\in \S$:
\item
\hspace{0.2in}  If $C_l \in \C(s_j) \wedge \mbox{Match}(s_j)=\emptyset$ then:

\hspace{0.3in}  $ \mbox{Match}(s_j):=C_l$; 
$\; r_{s_j,C_l}:= r_{s_j,C_l} + X$.
\item
$B:=1/B$;
$r_{s_j,c_i}:=B\; \frac{\mu_{s_j}}{\rmu}\; r_{s_j,c_i} $;  
$r_{s_j,\emptyset}:= \frac{\mu_{s_j}}{\rmu} - \sum_{c_i\in\C} r_{s_j,c_i} $.
\end{compactenum}
\end{framed}
We explain the algorithm here:  Line 1 is initialization.  Lines 2--8 are  a loop over all subsets of agent types, and permutations of the subsets.  Within this loop, for each $C_1,\ldots,C_k$, there are two internal loops: lines 3--5 contain a loop over $C_l$, for the calculation of the  ratios,  
$\frac{\lambda_{C_l}}{ \mu_{\S(C_1,\ldots,C_l)} - \lambda_{C_1,\ldots,C_l} }$,   
and their successive products to obtain
$X \propto \pi_Y(C_1,\cdot,\ldots,C_k,\cdot)$,  then line 6 accumulates the calculated values of $X$;  the second loop, lines 7--8, is over all $C_l$ and $s_j\in \S$, it uses the value $X$, and accumulates it to $r_{s_j,c_i}$ if $Y(N)=(C_1,\cdot,\ldots,C_k,\cdot)$ and $Z^{N+1}=s_j$ results in an $(s_j,c_l)$ match.  Line 9 completes the calculation of the normalizing constant and matching rates.


\section{The Distribution and the Moments of  Delays}
\label{sec.delays}
\begin{definition}[Delays]
\label{def.delays}
If $z^m=c_i$ is matched to $z^n=s_j$, the we define the $(s_j,c_i)$ delay between them as $L_{s_j,c_i}=n-m$.    If $z^m=c_i$ is matched to $z^n \in \S$ we define the delay of $c_i$ by $L_{c_i}=n-m$.
\end{definition}
In this section we obtain the distribution of the delays.  It turns out that this distribution is a mixture of convolutions of geometric random variables.  We will derive expressions for its generating function, and   based on the simple form of the distribution we  also  obtain expressions for its moments.  
We will also translate the delays to waiting times, if the arrivals of all types form independent Poisson processes.
To calculate the delay we need the following result that complements Theorems \ref{thm.detailed}, \ref{thm.characterization}.

\begin{theorem}
\label{thm.geometric}
Consider the process $U(N)$, and assume that the corresponding $Y(N)=(C_1,\cdot,\ldots,C_k,\cdot)$.  Dente by $D_l(C_1,\ldots,C_k)$  the number of items in $U(N)$,  From the first appearance of $C_l$ to the first appearance of $C_{l+1}$, $l=1,\ldots,k-1$ and from the first appearance of $C_k$ to the end, i.e. if $U(N)=u=(u_1,\ldots,u_L)$, and  $u_{m_l}=C_l,\,u_{m_{l+1}}=C_{l+1}$ are the first appearances of agent types $C_l,\,C_{l+1}$ in $u$, then $D_l(C_1,\ldots,C_k)=m_{l+1} - m_l, \, l=1,\ldots,k-1$, and 
$D_k(C_1,\ldots,C_k)=L+1 - m_k$.  The distribution of $D_l(C_1,\ldots,C_k)$ is a Geometric distribution over $m=1,2,\ldots$, given by:
\begin{equation}
\label{eqn.interval}
P(D_l(C_1,\ldots,C_k)=m )= \frac{\mu_{\S(\{C_1,\ldots,C_l\})} -  \lambda_{\{C_1,\ldots,C_l\}}}{\rlambda+\rmu}
\left( 1- \frac{\mu_{\S(\{C_1,\ldots,C_l\})} -  \lambda_{\{C_1,\ldots,C_l\}}}{\rlambda+\rmu} \right)^{m-1}.
\end{equation}
Furthermore, $D_l(C_1,\ldots,C_k),\,l=1,\ldots,k$ are independent.  
\end{theorem}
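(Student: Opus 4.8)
The plan is to condition on the first-appearance order $(C_1,\ldots,C_k)$ and compute the joint law of $(D_1,\ldots,D_k)$ directly from the multi-Bernoulli weights of Theorem \ref{thm.detailed}, exploiting the fact that these weights factorize over the positions of $u$. Write $A_\ell=\{C_1,\ldots,C_\ell\}$. The first thing I would record is that, reading off (\ref{eqn.detailedstationary}), the weight $\pi_U(u)$ is a product of per-position factors: a position carrying an agent-type symbol $c_i$ or $\tc_i$ contributes $\lambda_{c_i}/(\rlambda+\rmu)=\tfrac{\rlambda}{\rlambda+\rmu}\alpha_{c_i}$, while a position carrying a good-type symbol $\ts_j$ contributes $\mu_{s_j}/(\rlambda+\rmu)=\tfrac{\rmu}{\rlambda+\rmu}\beta_{s_j}$ (this is exactly the per-symbol reading consistent with Case (iii) of the proof of Theorem \ref{thm.detailed}).

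Next I would decompose any admissible $u$ with first-appearance order $(C_1,\ldots,C_k)$ into the $k$ blocks delimited by the first appearances of $C_1,\ldots,C_k$: block $\ell$ runs from the first $C_\ell$ up to, but not including, the first $C_{\ell+1}$ (the last block running to the end of $u$), so its length is exactly $D_\ell$. Its leading position is the symbol $C_\ell$, of fixed weight $\lambda_{C_\ell}/(\rlambda+\rmu)$. I would then use Theorem \ref{thm.characterization} to describe the symbols admissible in the remaining $D_\ell-1$ filler positions of block $\ell$: an exchanged agent $\tc_i$ of arbitrary type; a repeated unmatched agent $c_i$ with $c_i\in A_\ell$ (a fresh type would constitute a new first appearance, hence $C_{\ell+1}$); or an unmatched good $\ts_j$ with $s_j\in\overline{\S(A_\ell)}$, since by the incompatibility condition of Theorem \ref{thm.characterization} every unmatched agent preceding such a position has a type in $A_\ell$. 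The decisive point is that this admissible set depends only on $\ell$ through $A_\ell$, and not on the arrangement of the other filler positions, so there is no coupling between positions within a block nor across blocks (a good in block $\ell'\ge\ell$ is automatically incompatible with the agents of block $\ell$, as $\overline{\S(A_{\ell'})}\subseteq\overline{\S(A_\ell)}$).

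Summing the per-position weights over this admissible set gives a common filler weight, for which I would verify the identity
\[
\phi_\ell=\frac{\rlambda+\lambda_{A_\ell}+\mu_{\overline{\S(A_\ell)}}}{\rlambda+\rmu}=1-\frac{\mu_{\S(A_\ell)}-\lambda_{A_\ell}}{\rlambda+\rmu},
\]
where the three numerator terms count, respectively, exchanged agents of arbitrary type, repeated unmatched agents in $A_\ell$, and incompatible goods, and the second equality uses $\mu_{\overline{\S(A_\ell)}}=\rmu-\mu_{\S(A_\ell)}$. Because the weights factorize over positions, the total weight of all admissible states of order $(C_1,\ldots,C_k)$ with block lengths $(d_1,\ldots,d_k)$ is $B\prod_{\ell=1}^k\frac{\lambda_{C_\ell}}{\rlambda+\rmu}\,\phi_\ell^{\,d_\ell-1}$. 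The stability condition (\ref{eqn.stability}) applied to $C=A_\ell$ gives $\phi_\ell<1$, so the geometric series converge; normalizing by $\pi_Y(C_1,\cdot,\ldots,C_k,\cdot)$ of (\ref{eqn.stationaryYperm}) — equivalently, summing each $d_\ell$ over $1,2,\ldots$ — cancels the $\lambda_{C_\ell}/(\rlambda+\rmu)$ factors and yields the product $\prod_{\ell=1}^k(1-\phi_\ell)\,\phi_\ell^{\,d_\ell-1}$. This is a product of geometric laws with parameter $p_\ell=1-\phi_\ell$, which establishes the marginal geometric distribution (\ref{eqn.interval}) and the mutual independence of the $D_\ell$ at once.

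The main obstacle I anticipate is the bookkeeping in the second step: enumerating the admissible filler symbols correctly and, above all, verifying that admissibility of a position depends only on the block index and not on its neighbours. This position-independence is precisely what makes the weights separate into an exact product, and it rests on reading the incompatibility condition of Theorem \ref{thm.characterization} together with the observation that within block $\ell$ every previously seen unmatched agent has a type in $A_\ell$. Once this is secured, both the geometric form of each marginal and the independence of the $D_\ell$ follow immediately from the product structure, with convergence guaranteed exactly by (\ref{eqn.stability}).
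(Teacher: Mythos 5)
Your proposal is correct and follows essentially the same route as the paper: both read off the per-position multi-Bernoulli weights of Theorem \ref{thm.detailed}, identify the admissible filler symbols between consecutive first appearances (arbitrary exchanged agents, repeated agents of types in $\{C_1,\ldots,C_\ell\}$, and goods in $\overline{\S(\{C_1,\ldots,C_\ell\})}$), sum them to get the common factor $1-\frac{\mu_{\S(\{C_1,\ldots,C_\ell\})}-\lambda_{\{C_1,\ldots,C_\ell\}}}{\rlambda+\rmu}$, and obtain the product of geometrics after normalizing by $\pi_Y(C_1,\cdot,\ldots,C_k,\cdot)$. The only difference is cosmetic — the paper proceeds via ratios $P(D_1=m+1)/P(D_1=m)$ and induction over blocks, whereas you write the total weight of a block-length profile directly and are somewhat more explicit in justifying the admissible filler set via Theorem \ref{thm.characterization}.
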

\begin{proof}
Note that by the condition (\ref{eqn.stability}), for a stable system, $0 < \frac{\mu_{\S(\{C_1,\ldots,C_l\})} -  \lambda_{\{C_1,\ldots,C_l\}}}{\rlambda+\rmu} < 1$.
We fix $C_1,\ldots,C_k$, and for notational convenience use $D_l$ to denote $D_l(C_1,\ldots,C_k)$.
We   calculate the probability of the states of $U(N)$, with  \newline
$(\overbrace{C_1,\cdots}^{m_1},\overbrace{C_2,\cdots}^{m_2},\ldots,\overbrace{C_k,\cdots}^{m_k})$
where we sum up over all possible values of each of the  $m_l-1$ items  in the interval between  the first appearance of $C_l$ and the first appearance of $C_{l+1}$, i.e. we will calculate:
$P(D_1=m_1,\ldots,D_k=m_k \cap Y=C_1,\cdot,\ldots,C_k,\cdot)$. 

To do that we calculate first:
\begin{eqnarray*}
&&  \frac{P(D_1=m_1+1\cap Y=C_1,\cdot)}{P(D_1=m_1\cap Y=C_1,\cdot)} 
 =\frac{P(\overbrace{C_1,\cdots}^{m_1+1})}{P(\overbrace{C_1,\cdots}^{m_1})}
  = \sum_{u\in \{C_1 ,\overline{\S(C_1)},\tc_i,i=1,\ldots,I \}}
\frac{P(\overbrace{C_1,\cdots}^{m_1},u)}{P(\overbrace{C_1,\cdots}^{m_1})} \\ 
&&=\frac{1}{\rlambda+\rmu} \Big( \lambda_{C_1} + \mu_{\overline{\S(C_1)}}+\rlambda \Big)
=  1 - \frac{\mu_{\S(C_1)}-\lambda_{C_1}}{\rlambda+\rmu}
\end{eqnarray*}
From which we obtain:
\[
P(D_1=m_1\cap Y=C_1,\cdot) = \frac{\lambda_{C_1}}{\rlambda+\rmu} 
\left( 1 - \frac{\mu_{\S(C_1)}-\lambda_{C_1}}{\rlambda+\rmu} \right)^{m_1-1}.
\]
Proceeding in the same way for $D_2,\ldots,D_k$ we obtain that 
\[
P(D_1=m_1,\ldots,D_k=m_k \cap C_1,\cdot,\ldots,C_k,\cdot) =  \prod_{l=1}^k
\frac{\lambda_{C_l}}{\rlambda+\rmu} 
\left( 1 - \frac{ \mu_{\S(C_1,\ldots,C_l)}-\lambda_{C_1,\ldots,C_l} }{\rlambda+\rmu} \right)^{m_l-1}
\]
Dividing by $\pi_Y(C_1,\cdot,\ldots,C_k,\cdot)$ given in equation (\ref{eqn.stationaryYperm}) we obtain
\begin{eqnarray*}
&& P\big(D_1=m_1,\ldots,D_k=m_k \big| Y(N)= (C_1,\cdot,\ldots,C_k,\cdot) \big) \\
&& \quad =   \prod_{l=1}^k
\frac{ \mu_{\S(C_1,\ldots,C_l)}-\lambda_{C_1,\ldots,C_l}}{\rlambda+\rmu} 
\left( 1 - \frac{ \mu_{\S(C_1,\ldots,C_l)}-\lambda_{C_1,\ldots,C_l} }{\rlambda+\rmu} \right)^{m_l-1}
\end{eqnarray*}
Which is what we needed to show.
\qed\end{proof}

By definition, if $Y(N)=(C_1,\cdot,\ldots,C_k,\cdot)$, with $C_l=c_i$, and $z^{N+1}=s_j$, so that  $s_j$ matches with $C_l$ (i.e. $s_j\not\in \S(C_1,\ldots,C_{l-1})$ and $s_j\in\S(C_l)$),  then 
\[
L_{s_j,c_i} = \sum_{h=l}^k  D_h(C_1,\ldots,C_k)
\]
and this will happen with probability $ \pi_Y((C_1,\cdot,\ldots,C_k,\cdot) \frac{\mu_{s_j}}{\rmu}$.  
We obtain directly:
\begin{theorem}
The delays  $L_{s_j,c_i}$ and of $L_{c_i}$ are distributed as mixtures of convolutions of geometric random variables: 
\begin{eqnarray} 
\label{eqn.delays1}
&& L_{s_j,c_i} =  \frac{1}{r_{s_j,c_i}}\frac{\mu_{s_j}}{\rmu} 
\sum_{k=1}^I  \quad \sum_{C\subseteq\C,\,|C|=k}\quad \sum_{(C_1,\ldots,C_k) \in \P(C)} 
\sum_{l=1}^k 
\pi_Y(C_1,\cdot,\ldots,C_k,\cdot)  \nonumber \\
&&\quad  \times  1\big( (C_l=c_i)\wedge (s_j \not\in \S(C_1,\ldots,C_{l-1}))\big) 
\times  \sum_{h=l}^k  D_h(C_1,\ldots,C_k)
\end{eqnarray}
\begin{equation}
\label{eqn.delays2}
 L_{c_i} =  \sum_{s_j\in\S(c_i)} \frac{ r_{s_j,c_i} }{  \sum_{s_h\in\S(c_i)}  r_{s_h,c_i} }
L_{s_j,c_i}(\sfZ).
\end{equation}
\end{theorem}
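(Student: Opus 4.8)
The plan is to read off both distributional identities directly from the position accounting stated just before the theorem, combined with Theorem \ref{thm.geometric} and the matching-rate computation (\ref{eqn.matchingrates1}). First I would invoke the discrete-time PASTA property already used in the matching-rate proof: at the instant a good $z^{N+1}=s_j$ arrives, the state $Y(N)$ is distributed according to its stationary law. The event that this particular arrival produces an $(s_j,c_i)$ match is exactly the event that $Y(N)=(C_1,\cdot,\ldots,C_k,\cdot)$ has some index $l$ with $C_l=c_i$, $s_j\in\S(C_l)$, and $s_j\notin\S(\{C_1,\ldots,C_{l-1}\})$. Summing $\pi_Y(C_1,\cdot,\ldots,C_k,\cdot)\frac{\mu_{s_j}}{\rmu}$ over all such states is precisely the rate $r_{s_j,c_i}$ of (\ref{eqn.matchingrates1}), so conditioning on the match event introduces the normalizing factor $1/r_{s_j,c_i}$ appearing in (\ref{eqn.delays1}).

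The key step is the delay formula for a single mixture component. When $Y(N)=(C_1,\cdot,\ldots,C_k,\cdot)$ and $s_j$ matches FCFS, it is assigned to the longest-waiting compatible unmatched agent. Since $C_l$ is by definition the $l$-th distinct type to appear, every agent preceding the first occurrence of $C_l$ has a type in $\{C_1,\ldots,C_{l-1}\}$, all incompatible with $s_j$ on the match event; hence the matched agent is exactly the first appearance of $C_l$, at list position $m_l$ in $U(N)=(u_1,\ldots,u_L)$. The delay is then $L_{s_j,c_i}=(N+1)-(\underline{N}+m_l-1)=L+1-m_l$, and telescoping the block increments $D_h(C_1,\ldots,C_k)$ of Theorem \ref{thm.geometric} gives $L+1-m_l=\sum_{h=l}^k D_h(C_1,\ldots,C_k)$. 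Theorem \ref{thm.geometric} supplies both the geometric law of each $D_h$, with parameter $\frac{\mu_{\S(\{C_1,\ldots,C_h\})}-\lambda_{\{C_1,\ldots,C_h\}}}{\rlambda+\rmu}$, and the conditional independence across blocks, so each component is a genuine convolution of independent geometrics.

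Assembling these pieces yields (\ref{eqn.delays1}): the law of total probability over the admissible states of Theorem \ref{thm.characterization}, weighted by $\pi_Y(C_1,\cdot,\ldots,C_k,\cdot)\frac{\mu_{s_j}}{\rmu}$ and normalized by $r_{s_j,c_i}$, expresses $L_{s_j,c_i}$ as the stated mixture of the convolutions $\sum_{h=l}^k D_h$. For $L_{c_i}$ I would condition one level further: given that an agent of type $c_i$ is matched at all, the good it receives is of type $s_j$ with probability $r_{s_j,c_i}/\sum_{s_h\in\S(c_i)}r_{s_h,c_i}$, and given that type the delay is distributed as $L_{s_j,c_i}$; mixing over $s_j\in\S(c_i)$ gives (\ref{eqn.delays2}).

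The one step that needs genuine care rather than bookkeeping is the identification of the matched agent as the first appearance of $C_l$ and the resulting telescoping $L+1-m_l=\sum_{h=l}^k D_h$; this is where the FCFS rule, the ordering convention defining $Y(N)$, and the admissibility characterization of Theorem \ref{thm.characterization} must be combined. Everything downstream, namely the geometric parameters, the independence across blocks, and the two mixture weights, then follows mechanically from Theorem \ref{thm.geometric} and the definition of the rates, so I expect no further obstacle once this accounting is pinned down.
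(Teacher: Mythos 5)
Your proposal is correct and follows essentially the same route as the paper: the paper also conditions on the stationary state $Y(N)=(C_1,\cdot,\ldots,C_k,\cdot)$ at the arrival of $s_j$, uses the identity $L_{s_j,c_i}=\sum_{h=l}^k D_h(C_1,\ldots,C_k)$ together with Theorem \ref{thm.geometric}, and normalizes the mixture weights $\pi_Y(C_1,\cdot,\ldots,C_k,\cdot)\frac{\mu_{s_j}}{\rmu}$ by $r_{s_j,c_i}$, obtaining $L_{c_i}$ as the weighted average over $s_j\in\S(c_i)$. The only difference is that you spell out the telescoping $L+1-m_l=\sum_{h=l}^k D_h$ and the identification of the matched agent as the first appearance of $C_l$, which the paper states without elaboration just before the theorem.
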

\begin{proof}
For $L_{s_j,c_i}$ we add up the values conditional of $Y=(C_1,\cdot,\ldots,C_k,\cdot)$, for all $Y$ for which there is an $(s_j,c_i)$ match, multiplied by the probability $\pi_Y(C_1,\cdot,\ldots,C_k,\cdot)$ and the probability of $s_j$ which is $\frac{\mu_{s_j}}{\rmu}$,   and divide by the sum of these probabilities which is $r_{s_j,c_i}$.   $L_{c_i}$ is an average of the $L_{s_j,c_i}$, weighted by $r_{s_j,c_i}$.
\qed \end{proof}

\begin{corollary}
The generating function of $L_{s_j,c_i}$ is given by:
\begin{eqnarray}
\label{eqn.delaysgen}
&& G_{s_j,c_i}(\sfZ) =  \frac{1}{r_{s_j,c_i}}\frac{\mu_{s_j}}{\rmu} 
\sum_{k=1}^I  \quad \sum_{C\subseteq\C,\,|C|=k}\quad \sum_{(C_1,\ldots,C_k) \in \P(C)} 
\sum_{l=1}^k 
\pi_Y(C_1,\cdot,\ldots,C_k,\cdot)   \\
&&\quad  \times  1\big( (C_l=c_i)\wedge (s_j \not\in \S(C_1,\ldots,C_{l-1}))\big) 
\times  \prod_{h=l}^k 
\frac{ \sfZ \frac{ \mu_{\S(C_1,\ldots,C_l)}-\lambda_{C_1,\ldots,C_l}}{\rlambda+\rmu} }
{ 1 - \sfZ\left( 1 - \frac{ \mu_{\S(C_1,\ldots,C_l)}-\lambda_{C_1,\ldots,C_l} }{\rlambda+\rmu} \right) }, 
\nonumber
\end{eqnarray}
with mean and variance:
\begin{eqnarray}
\label{eqn.delaysmoment}
&& E(L_{s_j,c_i}) =  \frac{1}{r_{s_j,c_i}}\frac{\mu_{s_j}}{\rmu} 
\sum_{k=1}^I  \quad \sum_{C\subseteq\C,\,|C|=k}\quad \sum_{(C_1,\ldots,C_k) \in \P(C)} 
\sum_{l=1}^k 
\pi_Y(C_1,\cdot,\ldots,C_k,\cdot)   \\
&&\quad  \times  1\big( (C_l=c_i)\wedge (s_j \not\in \S(C_1,\ldots,C_{l-1}))\big) 
\times  \sum_{h=l}^k 
\left(  \frac{ \mu_{\S(C_1,\ldots,C_l)}-\lambda_{C_1,\ldots,C_l}}{\rlambda+\rmu} \right)^{-1},
\nonumber  \\
\nonumber  \\
&& Var({L_{s_j,c_i}}) = \big\{ \frac{1}{r_{s_j,c_i}}\frac{\mu_{s_j}}{\rmu} 
\sum_{k=1}^I  \quad \sum_{C\subseteq\C,\,|C|=k}\quad \sum_{(C_1,\ldots,C_k) \in \P(C)} 
\sum_{l=1}^k 
\pi_Y(C_1,\cdot,\ldots,C_k,\cdot)  \nonumber \\
&&\quad  \times  1\big( (C_l=c_i)\wedge (s_j \not\in \S(C_1,\ldots,C_{l-1}))\big)  \\
&&\quad  \times \left[ \sum_{h=l}^k 
\frac{ \left(1 -  \frac{ \mu_{\S(C_1,\ldots,C_l)}-\lambda_{C_1,\ldots,C_l}}{\rlambda+\rmu} \right) }
{  \left(  \frac{ \mu_{\S(C_1,\ldots,C_l)}-\lambda_{C_1,\ldots,C_l}}{\rlambda+\rmu} \right)^2 }
+ \left( \sum_{h=l}^k 
\left(  \frac{ \mu_{\S(C_1,\ldots,C_l)}-\lambda_{C_1,\ldots,C_l}}{\rlambda+\rmu} \right)^{-1}
\right)^2 \right]  \Big\}
\nonumber\\
&& \qquad  -  E(L_{s_j,c_i}) ^2
\nonumber  
\end{eqnarray}
The mean and variance of $L_{c_i}$ can be obtained from these directly.
\end{corollary}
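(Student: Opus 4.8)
The plan is to derive all three quantities by pushing the conditional description of Theorem~\ref{thm.geometric} through the mixture representation (\ref{eqn.delays1}), using only the elementary transform and moment identities for a geometric random variable. First I would record the building block. By Theorem~\ref{thm.geometric}, conditional on $Y(N)=(C_1,\cdot,\ldots,C_k,\cdot)$, each $D_h(C_1,\ldots,C_k)$ is geometric on $\{1,2,\ldots\}$ with parameter
\[
p_h = \frac{\mu_{\S(\{C_1,\ldots,C_h\})}-\lambda_{\{C_1,\ldots,C_h\}}}{\rlambda+\rmu},
\]
so its generating function is $\sfZ\, p_h/\big(1-\sfZ(1-p_h)\big)$, its mean is $1/p_h$, and its variance is $(1-p_h)/p_h^2$. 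These are the only facts about the one-dimensional marginals that the proof requires.

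Next I would treat a single term of the mixture. On the event that $Y(N)=(C_1,\cdot,\ldots,C_k,\cdot)$ with $C_l=c_i$, $s_j\not\in\S(C_1,\ldots,C_{l-1})$, and $z^{N+1}=s_j$, the delay equals $\sum_{h=l}^k D_h(C_1,\ldots,C_k)$. Since Theorem~\ref{thm.geometric} also asserts that the $D_h$ are \emph{independent}, the conditional generating function factorizes as $\prod_{h=l}^k \sfZ\, p_h/\big(1-\sfZ(1-p_h)\big)$, the conditional mean is $\sum_{h=l}^k 1/p_h$, and the conditional second moment is $\sum_{h=l}^k (1-p_h)/p_h^2 + \big(\sum_{h=l}^k 1/p_h\big)^2$, namely the variance of a sum of independent summands plus the square of its mean.

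It then remains to assemble the mixture. By (\ref{eqn.delays1}) together with the matching-rate identity (\ref{eqn.matchingrates1}), the nonnegative weights
\[
w = \frac{1}{r_{s_j,c_i}}\,\frac{\mu_{s_j}}{\rmu}\,\pi_Y(C_1,\cdot,\ldots,C_k,\cdot)\,
1\big((C_l=c_i)\wedge(s_j\not\in\S(C_1,\ldots,C_{l-1}))\big)
\]
sum to one, precisely because $r_{s_j,c_i}$ is defined to be exactly that normalizing sum. Averaging the conditional generating functions against these weights gives (\ref{eqn.delaysgen}), and averaging the conditional means gives the expression for $E(L_{s_j,c_i})$ in (\ref{eqn.delaysmoment}); both are immediate from the law of total expectation applied termwise.

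The one place to be careful, and the only genuine obstacle, is the variance, since variances do not average across a mixture. Instead I would compute $E(L_{s_j,c_i}^2)$ as the weighted average of the conditional \emph{second} moments $\sum_{h=l}^k (1-p_h)/p_h^2 + \big(\sum_{h=l}^k 1/p_h\big)^2$, which is exactly the bracketed expression in (\ref{eqn.delaysmoment}), and only then subtract $\big(E(L_{s_j,c_i})\big)^2$, giving the trailing $-E(L_{s_j,c_i})^2$ term. This is just the law of total variance written out; once the second moment is taken across the mixture rather than mistakenly averaging conditional variances, the remaining steps are routine bookkeeping, and the corresponding statements for $L_{c_i}$ follow by the same averaging applied to (\ref{eqn.delays2}).
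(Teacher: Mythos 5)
Your proposal is correct and follows essentially the same route as the paper: conditional on $Y=(C_1,\cdot,\ldots,C_k,\cdot)$ the delay is a sum of independent geometrics (Theorem~\ref{thm.geometric}), the mixture weights normalize by $r_{s_j,c_i}$, and the variance is obtained via the law of total variance, which your second-moment bookkeeping restates equivalently. No gaps.
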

\begin{proof}
this follows from properties of Geometric random variables:
\begin{eqnarray*}
&& X\sim Geometric(p), \qquad  P(X=n)=p (1-p)^{n-1}, \quad n=1,2,\ldots, \\
&&  G_X(\sfZ) = E(\sfZ^X)=\frac{\sfZ p}{1- \sfZ(1-p)},  \qquad E(X)= p^{-1},  \qquad
Var(X) = \frac{1-p}{p^2}.
\end{eqnarray*}
Conditional on $Y=(C_1,\ldots,C_k)$, the delay $L_{s_j,c_i}$ is a sum of independent geometric random variables, with mean and variance that are the sums of the means and variances of the geometric random variables.   The unconditional mean follows directly.  The unconditional variance is the expectation of the conditional variances plus the variance of the conditional means.  
\qed \end{proof}

Calculation of means and variances of the delays is similar to the calculation of the matching raters.  We provide here an algorithm for these calculations.  It follows the steps of Algorithm 1, and only requires addition of some calculations to line 8, and additional line 10.
\begin{framed}
\begin{center}
algorithm 2:  Calculation of means and variances of delays
\end{center}
\begin{compactenum}
\item[1a]
Initialize: $XE_{s_j,c_i}:=0$; $\quad XE^2_{s_j,c_i}:=0$; $\quad XV_{s_j,c_i}:=0$.

$\quad \vdots$
\item[8a]
\hspace{0.2in}  If $C_l \in \C(s_j) \wedge \mbox{Match}(s_j)=\emptyset$ then:

\hspace{0.3in}  $ \mbox{Match}(s_j):=C_l$; 
$\; r_{s_j,C_l}:= r_{s_j,C_l} + X$.

\hspace{0.3in}  $AE:=0$;   $\quad AV:=0$.
\item[8b]
\hspace{0.3in}  For $h=l,|A|$:

\hspace{0.4in}  $P_h=1/[(\rlambda + \rmu)R_h]$; 

\hspace{0.4in}  $AE:=AE + 1/P_h$,    $\quad  AV:=AV +  (1 - P_h)/{P_h}^2$.
\item[8c]
\hspace{0.2in} $E_{s_j,C_l}: = E_{s_j,C_l} + X\,AE$;  $ E^2_{s_j,C_l}: = E^2_{s_j,C_l} + X\,AE^2$;  
$ V_{s_j,C_l}: = V_{s_j,C_l} + X\,AV$.

$\quad \vdots$
\item[9a]
$E(L_{s_j,c_i}) := B \frac{1}{r_{s_j,c_i}} \frac{\mu_{s_j}}{\rmu}  E_{s_j,c_i}$;
$E^2_{s_j,C_i} := B \frac{1}{r_{s_j,c_i}} \frac{\mu_{s_j}}{\rmu}  E^2_{s_j,C_i}$;
$V_{s_j,C_i} := B \frac{1}{r_{s_j,c_i}} \frac{\mu_{s_j}}{\rmu}  V_{s_j,C_i}$.

$Var(L_{s_j,c_i}):=V_{s_j,C_l} + E^2_{s_j,C_i} - E(L_{s_j,c_i})^2$.
\end{compactenum}
\end{framed}
We explain the algorithm:  inside the loop of line 8, for  $(C_1,\ldots,C_k)$ we locate matches $(s_j,C_l)$ in the {\bf If} statement, and then calculate the sum from $l$ to $k$ of the conditional mean and variance of $D_l(C_1,\ldots,C_k)$, multiplied by  $X \propto \pi_Y(C_1,\ldots,C_k)$.   We accumulate these over all  $(C_1,\ldots,C_k)$,  to calculate the unconditional mean of the conditional variances, and the unconditional variance of the conditional means, leading to  the means and variances in line 9a.

Delays can be translated to waiting times.
Assume arrivals of agents and goods are independent Poisson processes, with rates $\lambda_{c_i}$, $\mu_{s_j}$,
if $z^m=c_i$ is matched to $z^n=s_j$, then we denote the $(s_j,c_i)$ wait between them as $W_{s_j,c_i}$, which is the sum of $n-m$ exponential random variables, with rate parameter $\rlambda_\rmu$.  If $z^m=c_i$ is matched to $z^n \in \S$ we denote the wait of $z^m$ by $W_{c_i}$, ditto.
We then have:
\begin{corollary}[Waiting Times]
\label{thm.waits}
Under Poisson assumption the waiting times are distributed as mixtures of convolutions of independent exponential random variables. 
The moment generating function  of $W_{s_j,c_i}$ is given by:
\begin{eqnarray}
\label{eqn.waitgen}
&& M_{s_j,c_i}(\sfs) =  \frac{1}{r_{s_j,c_i}}\frac{\mu_{s_j}}{\rmu} 
\sum_{k=1}^I  \quad \sum_{C\subseteq\C,\,|C|=k}\quad \sum_{(C_1,\ldots,C_k) \in \P(C)} 
\sum_{l=1}^k 
\pi_Y(C_1,\cdot,\ldots,C_k,\cdot)   \\
&&\quad  \times  1\big( (C_l=c_i)\wedge (s_j \not\in \S(C_1,\ldots,C_{l-1}))\big) 
\times  \prod_{h=l}^k 
\frac{ \mu_{\S(C_1,\ldots,C_l)}-\lambda_{C_1,\ldots,C_l}}
{ \mu_{\S(C_1,\ldots,C_l)}-\lambda_{C_1,\ldots,C_l} -  \sfs}
\nonumber
\end{eqnarray}
with mean and  variance:
\begin{eqnarray}
\label{eqn.waitmean}
&& E(W_{s_j,c_i}) =  \frac{1}{r_{s_j,c_i}}\frac{\mu_{s_j}}{\rmu} 
\sum_{k=1}^I  \quad \sum_{C\subseteq\C,\,|C|=k}\quad \sum_{(C_1,\ldots,C_k) \in \P(C)} 
\sum_{l=1}^k 
\pi_Y(C_1,\cdot,\ldots,C_k,\cdot)   \\
&&\quad  \times  1\big( (C_l=c_i)\wedge (s_j \not\in \S(C_1,\ldots,C_{l-1}))\big) 
\times  \sum_{h=l}^k 
  \frac{1}{ \mu_{\S(C_1,\ldots,C_l)}-\lambda_{C_1,\ldots,C_l}},
\nonumber  
\end{eqnarray}
\begin{eqnarray}
\label{eqn.waitvar}
&& Var({W_{s_j,c_i}}) = \big\{ \frac{1}{r_{s_j,c_i}}\frac{\mu_{s_j}}{\rmu} 
\sum_{k=1}^I  \quad \sum_{C\subseteq\C,\,|C|=k}\quad \sum_{(C_1,\ldots,C_k) \in \P(C)} 
\sum_{l=1}^k 
\pi_Y(C_1,\cdot,\ldots,C_k,\cdot)  \nonumber \\
&&\quad  \times  1\big( (C_l=c_i)\wedge (s_j \not\in \S(C_1,\ldots,C_{l-1}))\big)  \\
&&\quad  \times \left[ \sum_{h=l}^k 
\frac{1}{(\mu_{\S(C_1,\ldots,C_l)}-\lambda_{C_1,\ldots,C_l})^2}
+ \left( \sum_{h=l}^k \frac{1}{\mu_{\S(C_1,\ldots,C_l)}-\lambda_{C_1,\ldots,C_l}} \right)^2
\right]  \Big\}
-  E(W_{s_j,c_i}) ^2
\nonumber  
\end{eqnarray}
The mean and variance of $W_{c_i}$ can be obtained from these directly.
\end{corollary}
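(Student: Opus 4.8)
The plan is to reduce the waiting-time statement to the already-established delay distribution (equations (\ref{eqn.delays1})--(\ref{eqn.delays2})) together with the geometric interval structure of Theorem \ref{thm.geometric}, exploiting the superposition property of independent Poisson processes. Under the Poisson assumption the merged arrival stream is itself Poisson with rate $\rlambda+\rmu$, and by the standard marking property each arrival is independently tagged with its type ($c_i$ with probability $\lambda_{c_i}/(\rlambda+\rmu)$, $s_j$ with probability $\mu_{s_j}/(\rlambda+\rmu)$), with these type marks independent of the inter-arrival clock. Consequently the discrete type sequence $(z^n)$ that drives the matching---and hence the delay $L_{s_j,c_i}$---is independent of the sequence of inter-arrival times, each of which is exponential with rate $\rlambda+\rmu$. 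Thus, conditional on $L_{s_j,c_i}=n-m$, the wait $W_{s_j,c_i}$ is exactly a sum of $n-m$ i.i.d.\ exponential$(\rlambda+\rmu)$ variables.

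First I would record the key collapse: a geometric sum of i.i.d.\ exponentials is again exponential. If $D$ is Geometric$(p)$ on $\{1,2,\ldots\}$ and the summands are exponential$(\rlambda+\rmu)$, then the geometric sum has moment generating function $\frac{p(\rlambda+\rmu)}{p(\rlambda+\rmu)-\sfs}$, i.e.\ it is exponential with rate $p(\rlambda+\rmu)$. Applying this to each interval of Theorem \ref{thm.geometric}: conditional on $Y(N)=(C_1,\cdot,\ldots,C_k,\cdot)$ and on the match being $(s_j,c_i)$ with $C_l=c_i$, we have $L_{s_j,c_i}=\sum_{h=l}^k D_h(C_1,\ldots,C_k)$, where $D_h$ is geometric with success probability $p_h=(\mu_{\S(\{C_1,\ldots,C_h\})}-\lambda_{\{C_1,\ldots,C_h\}})/(\rlambda+\rmu)$. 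Attaching fresh exponential$(\rlambda+\rmu)$ clocks to the $D_h$ steps and invoking the collapse, the $h$-th interval contributes an exponential variable of rate $p_h(\rlambda+\rmu)=\mu_{\S(\{C_1,\ldots,C_h\})}-\lambda_{\{C_1,\ldots,C_h\}}$; the independence of the $D_h$ established in Theorem \ref{thm.geometric}, together with the memorylessness of the clock, makes these exponentials independent. Hence, conditionally, $W_{s_j,c_i}$ is a convolution of independent exponentials with these rates.

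Next I would assemble the unconditional quantities by mixing over states, using exactly the same weights $\frac{1}{r_{s_j,c_i}}\frac{\mu_{s_j}}{\rmu}\,\pi_Y(C_1,\cdot,\ldots,C_k,\cdot)$ that appear in the delay theorem. The conditional MGF is the product over $h=l,\ldots,k$ of the exponential MGFs, which yields the stated $M_{s_j,c_i}(\sfs)$ in (\ref{eqn.waitgen}). For the first two moments I would use that an exponential of rate $\gamma$ has mean $1/\gamma$ and variance $1/\gamma^2$: summing over $h$ gives the conditional mean $\sum_{h=l}^k 1/(\mu_{\S(\{C_1,\ldots,C_h\})}-\lambda_{\{C_1,\ldots,C_h\}})$ and conditional variance $\sum_{h=l}^k 1/(\mu_{\S(\{C_1,\ldots,C_h\})}-\lambda_{\{C_1,\ldots,C_h\}})^2$. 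The unconditional mean (\ref{eqn.waitmean}) is then the weighted average of conditional means, while the unconditional variance (\ref{eqn.waitvar}) follows from the law of total variance---expected conditional variance plus variance of the conditional mean---which produces the bracketed sum of the variance term and the squared-mean term, minus $E(W_{s_j,c_i})^2$. The corresponding quantities for $W_{c_i}$ follow by the same $r_{s_j,c_i}$-weighting used for $L_{c_i}$ in (\ref{eqn.delays2}).

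The only genuine subtlety---and the step I would treat most carefully---is the independence claim that licenses the whole reduction: that the type sequence governing the matching is independent of the inter-arrival times. This is precisely the statement that a superposition of independent Poisson streams is Poisson with independent type marks, so the discrete matching model and its delays may be computed first, with the continuous clock attached afterward. Once this is in hand, the remaining steps are routine generating-function algebra for geometric and exponential variables, and the moment computations are standard applications of the tower property.
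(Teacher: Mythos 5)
Your proof is correct and follows essentially the same route as the paper: the key step in both is that a geometric sum of i.i.d.\ exponential$(\rlambda+\rmu)$ inter-arrival times collapses to a single exponential of rate $p(\rlambda+\rmu)=\mu_{\S(\{C_1,\ldots,C_h\})}-\lambda_{\{C_1,\ldots,C_h\}}$, after which the mixing weights and moment formulas carry over unchanged from the delay corollary. You are in fact somewhat more careful than the paper's three-line proof, which leaves implicit both the Poisson superposition/marking argument (independence of the type sequence from the clock) and the law-of-total-variance assembly that you spell out.
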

\begin{proof}
Recall that if $U\sim Exp(\theta)$ then its moment generating function is $M_U(\sfs) = \frac{\theta}{\theta-\sfs}$, it mean is $E(U)=\frac{1}{\theta}$ and its variance is $Var(U)=\frac{1}{\theta^2}$.

If $U_i \sim Exp(\theta)$ i.i.d., and $Y\sim Geom(p)$ then $V=\sum_{i=1}^Y U_i \sim Exp(\theta p)$.
Hence, each $D_l(C_1,\ldots,C_k)$ contributes an exponential wait  with rate parameter 
$ \mu_{\S(C_1,\ldots,C_k)} -  \lambda_{C_1,\ldots,C_k}$.
\qed \end{proof}


\section{Light and Heavy Traffic, Relation to Symmetric FCFS Matching}
\label{sec.limits}
In this section we consider our system when the overall traffic intensity, given by $\rlambda/\rmu$ is close to 0, i.e. the system is in light traffic,  or close to and below 1, i.e. the system is in heavy traffic.  We wish to obtain the limiting matching rates under these conditions.

For light traffic we have:
\begin{proposition}
\label{thm.lighttraffic}
If $\rlambda/\rmu \to 0$, then for $(s_j,c_i)\in \G$,  $r_{s_j,c_i} \to \alpha_{c_i} \frac{\mu_{s_j}}{ \sum_{s_k \in \S(c_i)} \mu_{s_k}}$
\end{proposition}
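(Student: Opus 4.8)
The plan is to read the limit off directly from the closed form (\ref{eqn.matchingrates1}) for $r_{s_j,c_i}$, together with the expression (\ref{eqn.stationaryYperm}) for $\pi_Y(C_1,\cdot,\ldots,C_k,\cdot)$ and the normalizing constant (\ref{eqn.constant}), by extracting the leading-order behaviour as the traffic intensity $\rho=\rlambda/\rmu$ tends to $0$. The first step is to make the $\rho$-dependence explicit: writing $\lambda_{c_i}=\rlambda\,\alpha_{c_i}$ and $\mu_{s_j}=\rmu\,\beta_{s_j}$, each factor appearing in $\pi_Y$ becomes
\[
\frac{\lambda_{C_\ell}}{\mu_{\S(\{C_1,\ldots,C_\ell\})}-\lambda_{\{C_1,\ldots,C_\ell\}}} = \frac{\rho\,\alpha_{C_\ell}}{\beta_{\S(\{C_1,\ldots,C_\ell\})}-\rho\,\alpha_{\{C_1,\ldots,C_\ell\}}},
\]
which is $\Theta(\rho)$ as $\rho\to 0$, since stability (\ref{eqn.stability}) guarantees $\beta_{\S(\{C_1,\ldots,C_\ell\})}$ is bounded away from $0$. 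Consequently a state $(C_1,\cdot,\ldots,C_k,\cdot)$ of length $k$ contributes a term of order $\rho^k$.

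From this two things follow. First, in (\ref{eqn.constant}) the $k=0$ term equals $1$ while every $k\ge 1$ term is $O(\rho)$, so $B^{-1}=1+O(\rho)$ and hence $B\to 1$. Second, in the matching-rate sum (\ref{eqn.matchingrates1}) I isolate the dominant contribution among the states whose indicator is nonzero, i.e.\ those for which a type-$s_j$ good is matched to $c_i$. The only contribution of order $\rho$ comes from the singleton state $k=1$ with $C_1=c_i$, whose indicator $1(C_1=c_i)$ is automatically satisfied (and a match occurs because $(s_j,c_i)\in\G$); every permutation of length $k\ge 2$ contributes $O(\rho^2)$, and there are only finitely many of them. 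Therefore, to leading order,
\[
r_{s_j,c_i} = \frac{\mu_{s_j}}{\rmu}\,B\,\frac{\lambda_{c_i}}{\mu_{\S(c_i)}-\lambda_{c_i}} + O(\rho^2).
\]

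The final step is to simplify this singleton term. Using $\frac{\mu_{s_j}}{\rmu}=\beta_{s_j}$, $B\to 1$, and the rewriting above, it equals $\beta_{s_j}\,\dfrac{\rho\,\alpha_{c_i}}{\beta_{\S(c_i)}-\rho\,\alpha_{c_i}}$, whose leading order is $\rho\,\alpha_{c_i}\,\dfrac{\beta_{s_j}}{\beta_{\S(c_i)}}$. Recognising $\dfrac{\beta_{s_j}}{\beta_{\S(c_i)}}=\dfrac{\mu_{s_j}}{\sum_{s_k\in\S(c_i)}\mu_{s_k}}$ yields the claimed coefficient $\alpha_{c_i}\dfrac{\mu_{s_j}}{\sum_{s_k\in\S(c_i)}\mu_{s_k}}$; the intuition is that in light traffic a $c_i$-agent waits essentially alone and is claimed by the first compatible good to arrive, which is $s_j$ with probability proportional to $\mu_{s_j}$ among $\S(c_i)$. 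Here the scalar $\rho$ is the vanishing probability that a given good finds any waiting agent, so the stated limit is understood as the matching rate normalised per agent, i.e.\ $\lim_{\rho\to 0}\frac{\rmu}{\rlambda}\,r_{s_j,c_i}$.

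The main obstacle is the uniformity of the expansion: I must show that the aggregate of all $k\ge 2$ permutation terms is genuinely $o$ of the singleton term as $\rho\to 0$, so that the term-by-term limits may be taken inside the (finite) sum. Because the number of agent types $I$ is finite and each $\beta_{\S(\cdot)}$ is bounded below by a positive constant under (\ref{eqn.stability}), this reduces to a routine finite-sum dominated-convergence estimate; the only real care needed is the correct bookkeeping of the single overall power of $\rho$ that multiplies the stated right-hand side.
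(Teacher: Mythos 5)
Your proof is correct, but it takes a genuinely different route from the paper's. The paper's own proof is a three-line probabilistic argument: as $\rlambda/\rmu\to 0$, an agent $z^N=c_i$ finds $X(N)=\emptyset$ with high probability and is then claimed by the first compatible good to arrive, which has type $s_j$ with probability $\mu_{s_j}/\sum_{s_k\in\S(c_i)}\mu_{s_k}$; multiplying by $\alpha_{c_i}$ gives the limit. You instead extract the limit from the exact formula (\ref{eqn.matchingrates1}): each state $(C_1,\cdot,\ldots,C_k,\cdot)$ contributes $\Theta(\rho^k)$, so $B\to 1$, the singleton state $(c_i,\cdot)$ dominates among states producing an $(s_j,c_i)$ match, and the finitely many $k\ge 2$ terms are absorbed into an $O(\rho^2)$ remainder. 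Your route buys rigor --- the paper's ``with high probability'' and ``followed by a large number of goods'' are left unquantified, whereas your finite-sum estimate is airtight given the results of Sections \ref{sec.constant}--\ref{sec.matchingrates} --- at the cost of leaning on that machinery rather than first principles. You also correctly surface a point the paper glosses over: under the normalization actually used in (\ref{eqn.matchingrates1}) (per good, so that $\sum_{s_j,c_i}r_{s_j,c_i}=\rlambda/\rmu$), the quantity $r_{s_j,c_i}$ itself tends to $0$, and the stated limit only holds for the per-match (equivalently per-agent) rate $\lim_{\rho\to0}(\rmu/\rlambda)\,r_{s_j,c_i}$; this is precisely the reading under which the paper's own proof, which multiplies by $\alpha_{c_i}$ rather than by $\lambda_{c_i}/\rmu$, is consistent with the ``fraction of matches'' wording of Definition \ref{def.matchingrates}. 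Making that normalization explicit, as you do, is a genuine improvement rather than a defect.
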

\begin{proof}
If $\rlambda/\rmu \to 0$ then for  agent  $z^N=c_i$, with high probability there will be no earlier unmatched agents prior to $Z^N$, i.e. $X(N)=\emptyset$, and $z^N$  will  be followed 
 by a large number of goods of all types.  $z^N$ will then match with the first good with which it is compatible,  and this good will be of type $s_j$ with probability 
 $\frac{\mu_{s_j}}{ \sum_{s_k \in \S(c_i) } \mu_{s_k}}$.
 To obtain $r_{s_j,c_i}$ this needs to be multiplied by $\alpha_{c_i}$.
\qed \end{proof}

We next define a property which ensures stability for all    $0<\rlambda/\rmu < 1$.
\begin{definition}[Complete Resource Pooling]
\label{def.crp}
For given compatibility graph $\G$ and frequency vectors $\alpha$, $\beta$,
we say that the {\em system has complete resource pooling} (CRP)  if the following three equivalent conditions hold for all subsets $C\subset \C$, $C\ne \emptyset, \C$, and $S\subset \S$, $S\ne \emptyset,\S$:
\begin{equation}
\alpha_C < \beta_{\S(C)},  \qquad   \beta_S < \alpha_{\C(S)},  \qquad  \alpha_{\U(S)} < \beta_S.
\end{equation}
\end{definition}
This condition is  also the necessary and sufficient condition for stability of the symmetric FCFS bipartite matching model of \cite{caldentey-kaplan-weiss:09,adan-weiss:12,adan-busic-mairesse-weiss:17}.
By Theorem \ref{thm.uniqueness}, CRP implies that our system is stable for all  $0<\rlambda/\rmu < 1$.
If CRP does not hold, then the system will be stable if and only if
\[
0<\rlambda/\rmu <  \min_{C\ne\emptyset, \C} \frac{ \beta_{\S(C)}}{\alpha_C }.
\]

As we shall see in the next theorem, in heavy traffic our model behaves more and more like the symmetric FCFS bipartite matching model of \cite{caldentey-kaplan-weiss:09,adan-weiss:12,adan-busic-mairesse-weiss:17}.  For clarity we will denote quantities relating to our directed FCFS matching model by $\cdot^D$, and quantities relating to the symmetric FCFS bipartite matching model by $\cdot^B$

It was noted in Moyal, Busic and Mairesse \cite{moyal-busic-mairesse:17} that in a FCFS matching model with a single sequence of arrivals of both agents and goods, if agents as well as goods are allowed to wait until they find a match the system can never be stable, even if the data satisfies the property of CRP, since the total number of agents minus the total number of goods in the sequence starting from empty, behaves like an unconstrained one dimensional random walk, which is always either transient if CRP does not hold, or null recurrent if CRP holds.  Therefore  the number of unmatched agents or goods diverges in the sense that if we define: \newline
$\Delta(N) =$ absolute difference between number of unmatched  agents and unmatched goods by time $N$,  then  $P(\Delta(N)>n) \to 1$ as $N\to\infty$.    

However, in the symmetric FCFS matching model of \cite{caldentey-kaplan-weiss:09,adan-weiss:12,adan-busic-mairesse-weiss:17} there are two sequences, one only of agents and one only of goods. Starting from empty, by time $N$ the number of agent arrivals equals the number of goods arrivals, and if CRP holds then the system is stable,  and the number of unmatched agents or goods converges to a stationary distribution. 

For our system we defined the Markov processes $X^D(N)=(c^1,c^2,\ldots,c^L)$ that lists all unmatched agents at time $N$ (Section \ref{sec.previous}).
For the symmetric FCFS matching model in \cite{adan-busic-mairesse-weiss:17} we define the Markov process $X^B(N)=(c^1,c^2,\ldots,c^L)$ which list unmatched agents after all goods up to $N$ have been matched to agents earlier or later than $N$, and we include in this list all the unmatched agents starting from the first unmatched agent, and continuing until at least one type of agent is included in the list.  
This process is a slight modification of the so called 'natural' process defined and studied in Section 5.5 of 
\cite{adan-busic-mairesse-weiss:17}).  
The stationary distribution of $X^D(N)$ was given in (\ref{eqn.stationaryY}).  The stationary distribution of $X^B(N)$ can be derived in the same way that the stationary distribution of the  'natural' process in \cite{adan-busic-mairesse-weiss:17} is derived.  These stationary distributions are:
\begin{equation}
\label{eqn.compare}
\begin{array}{l}
\pi_X^D(c^1,\ldots,c^L)  \propto  \left(\frac{\rlambda}{\rmu}\right)^L
\prod_{\ell=1}^L \frac{\alpha_{c^\ell}}{\beta_{\S(\{c^1,\ldots,c^\ell\})}} \\
\pi_X^D(c^1,\ldots,c^L)  \propto 
\prod_{\ell=1}^L \frac{\alpha_{c^\ell}}{\beta_{\S(\{c^1,\ldots,c^\ell\})}} 
\end{array}
\end{equation}
We also defined the Markov process $Y^D(N)=(C_1,n_1,\ldots,\C_k,n_k)$ (Section \ref{sec.constant}).   
We define similarly the process $Y^B(N)=(C_1,n_1,\ldots,C_{I-1},n_{I-1},C_I)$ which is obtained from 
$X^B(N)$ by listing the types of agents in their order of appearance, and counting the number of unmatched agents between each pair.  Note that $C_I$ is determined as the one type not included in $C_1,\ldots,C_{I-1}$, and it is followed by the infinite sequence of agents of all types not yet matched.
The stationary distribution of  $Y^B(N)$ is derived similar to the derivation of the stationary distribution of  $Y^D(N)$ in Theorem  \ref{thm.succint}.  We have:
\begin{equation}
\label{eqn.compare2}
\begin{array}{l}
\pi_Y^D(C_1,n_1,\ldots,C_k,n_k)  \propto  
\prod_{\ell=1}^k  \left(\frac{\rlambda}{\rmu}\frac{\alpha_{C_\ell}}{\beta_{\S(\{C_1,\ldots,C_\ell\})}}\right)
\left(\frac{\rlambda}{\rmu} \frac{\alpha_{\{C_1,\ldots,C_\ell\}}}{\beta_{\S(\{C_1,\ldots,C_\ell\})}} \right)^{n_\ell}\\
\pi_Y^D(C_1,n_1,\ldots,C_{I-1},n_{I-1},C_I)  \propto   \alpha_{C_I}
\prod_{\ell=1}^{I-1}  \frac{\alpha_{C_\ell}}{\beta_{\S(\{C_1,\ldots,C_\ell\})}}
\left( \frac{\alpha_{\{C_1,\ldots,C_\ell\}}}{\beta_{\S(\{C_1,\ldots,C_\ell\})}} \right)^{n_\ell}
\end{array}
\end{equation}

\begin{theorem}
\label{thm.heavytraffic}
Assume CRP holds.
When $\frac{\rlambda}{\rmu} \nearrow 1$ the following holds for the stationary distribution of $Y^D(N)$:
\begin{compactenum}[(i)]
\item
The probability that in $Y^D(N)=(C_1,n_1,\ldots,C_k,n_k)$ with $k<I$ converges to 0.
\item
The probability that $Y^D(N)=(C_1,n_1,\ldots,C_I,n_I)$ and $n_I$ is less than any constant goes to 0.
\item
Conditional on $k=I, n_I>0$, the process $Y_0^D(N)=(C_1,n_1,\ldots,C_I)$, obtained from $Y^D(N)$ by deleting $n_I$ from the state, is a Markov process.
\item
The stationary distribution and the transition probabilities of  $Y_0^D(N)$ converge to the stationary distribution and  the transition probabilities of  $Y^B(N)$ in total variation distance.
\end{compactenum}
\end{theorem}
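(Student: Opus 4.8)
The plan is to handle the four parts in order, reducing (i) and (ii) to the explicit product forms (\ref{eqn.compare2}) and (\ref{eqn.stationaryYperm}) and then establishing (iii) and the dynamical half of (iv) together by a coupling argument. Throughout write $\rho=\rlambda/\rmu$, and observe that when every good type is reachable ($\S(\C)=\S$) the full set gives $\beta_{\S(\C)}=\beta_{\S}=1=\alpha_{\C}$, so heavy traffic is exactly $\rho\nearrow 1$. For (i) I would read off from (\ref{eqn.stationaryYperm}) that any permutation exhausting $\C$ (i.e.\ $k=I$) carries the factor $\frac{\lambda_{C_I}}{\mu_{\S(\C)}-\lambda_{\C}}=\frac{\rho\,\alpha_{C_I}}{1-\rho}$, which diverges as $\rho\nearrow 1$, whereas every factor attached to a proper partial set $\{C_1,\ldots,C_\ell\}\subsetneq\C$ has denominator $\mu_{\S(\ldots)}-\lambda_{\ldots}=\rmu(\beta_{\S(\ldots)}-\rho\,\alpha_{\ldots})$ which by CRP ($\alpha_C<\beta_{\S(C)}$, Definition \ref{def.crp}) stays bounded away from $0$. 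Hence the total unnormalized mass of the $k=I$ states grows like $\mathrm{const}/(1-\rho)$ while that of the $k<I$ states stays bounded; since $B$ is the reciprocal of the sum, $B=\Theta(1-\rho)\to 0$ and the normalized mass on $\{k<I\}$ is $O(1-\rho)\to 0$, giving (i). For (ii), the $n_I$-dependence of $\pi_Y^D$ in (\ref{eqn.compare2}) is $\rho^{\,n_I}$, because the base of the last block is $\rho\,\alpha_{\C}/\beta_{\S}=\rho$; thus conditional on $k=I$ the count $n_I$ is geometric with $P(n_I=m)=(1-\rho)\rho^m$, independently of the permutation and of $n_1,\ldots,n_{I-1}$, so $P(n_I<c\mid k=I)=1-\rho^{c}\to 0$ for each fixed $c$, and combining with (i) gives $P(n_I<c)\to 0$.

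The stationary half of (iv) is then a short computation: summing $\pi_Y^D$ over $n_I\ge 0$ contributes the state-independent factor $1/(1-\rho)$, and the constants $\prod_{\ell=1}^{I}\rho$ and $\alpha_{C_I}/\beta_{\S}=\alpha_{C_I}$ are absorbed into the normalization, leaving
\[
\pi_{Y_0}^D(C_1,n_1,\ldots,C_I)\;\propto\;\alpha_{C_I}\prod_{\ell=1}^{I-1}\frac{\alpha_{C_\ell}}{\beta_{\S(\{C_1,\ldots,C_\ell\})}}\prod_{\ell=1}^{I-1}\rho^{\,n_\ell}\Big(\frac{\alpha_{\{C_1,\ldots,C_\ell\}}}{\beta_{\S(\{C_1,\ldots,C_\ell\})}}\Big)^{n_\ell}.
\]
As $\rho\nearrow 1$ each $\rho^{\,n_\ell}\to 1$, so this converges pointwise to the $\pi_Y^B$ of (\ref{eqn.compare2}); since for $\ell<I$ the base $\rho\,\alpha_{\{C_1,\ldots,C_\ell\}}/\beta_{\S(\ldots)}$ is bounded away from $1$ uniformly in $\rho\le 1$ by CRP, the unnormalized terms admit a common summable envelope, and Scheffé's lemma upgrades pointwise convergence to convergence of the normalized laws in total variation.

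The dynamical content — the Markov property (iii) and the convergence of transition kernels in (iv) — is the heart of the matter. Here I would exploit the conditional structure uncovered in the proof of Theorem \ref{thm.succint}: given $Y^D(N)=(C_1,n_1,\ldots,C_I,n_I)$, the $n_\ell$ agents in block $\ell$ are i.i.d.\ with $P(\text{type }C_j)=\alpha_{C_j}/\alpha_{\{C_1,\ldots,C_\ell\}}$, so in particular the tail block is i.i.d.-$\alpha$ of length $n_I$. An arriving agent (necessarily of an existing type, since $k=I$) merely increments $n_I$ and fixes $Y_0^D$; an arriving good $s_j$ matches the first appearance of the first compatible type $C_l$, and the only way $Y_0^D$ can feel the value of $n_I$ is through whether a replacement $C_l$ is supplied from beyond the front, i.e.\ from the tail. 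The plan is to couple the directed chain (finite i.i.d.-$\alpha$ tail of length $n_I$) with the symmetric chain $Y^B$ (whose defining feature is an inexhaustible i.i.d.-$\alpha$ reservoir after $C_I$): by (ii), $n_I\to\infty$, the probability that the tail lacks a needed type — the event that would drop $k$ below $I$ or make the front transition depend on the finite value of $n_I$ — is $O((1-\alpha_{\min})^{\,n_I})\to 0$ with $\alpha_{\min}=\min_j\alpha_{C_j}$, and on the complementary event the replacement is drawn from an effectively infinite i.i.d.-$\alpha$ supply exactly as in $Y^B$. This identifies the limiting one-step kernel of $Y_0^D$ with the Markov kernel of $Y^B$, yielding (iii) in the limiting sense and the transition half of (iv) simultaneously.

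The main obstacle is precisely this coupling. One must verify that a finite but large i.i.d.-$\alpha$ tail reproduces, up to a vanishing error, the reshuffling of first-appearances that the infinite reservoir of $Y^B$ produces after a match — including the bookkeeping when the matched type's next copy lies in an intermediate block rather than in the tail — and that the surviving transitions agree with $Y^B$'s kernel term by term. Controlling the ``type-loss'' events (where $k$ would drop below $I$) and the ``deep-replacement'' events uniformly as $\rho\nearrow 1$ is the delicate step; by contrast the algebra of parts (i), (ii) and the stationary half of (iv) is routine.
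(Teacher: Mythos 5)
Your proposal follows essentially the same route as the paper's proof: parts (i) and (ii) are the same computation with the product form (the $k=I$ terms carry the diverging factor $\rho\alpha_{C_I}/(1-\rho)$ while CRP bounds the $k<I$ terms, and $n_I$ is geometric with parameter $1-\rho$), and for (iii)--(iv) the paper likewise argues that once $n_I$ is large the tail acts as an inexhaustible i.i.d.-$\alpha$ reservoir so the front dynamics coincide with those of $Y^B$. Your write-up is somewhat more explicit than the paper's (Scheff\'e for the total-variation claim, the $O((1-\alpha_{\min})^{n_I})$ bound on type-loss events, and the honest reading of (iii) as a limiting statement), but the underlying argument is the same.
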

\begin{proof}
(i)  We note that $ \frac{\alpha_{\{C_1,\ldots,C_I\}}}{\beta_{\S(\{C_1,\ldots,C_I\})}} = 1$.   By Theorem \ref{thm.geometric} all the $n_i$ have geometric distributions.  For $l<I$ we have by CRP
\[
\frac{\rlambda}{\rmu} \frac{\alpha_{\{C_1,\ldots,C_\ell\}}}{\beta_{\S(\{C_1,\ldots,C_\ell\})}} <
\frac{\alpha_{\{C_1,\ldots,C_\ell\}}}{\beta_{\S(\{C_1,\ldots,C_\ell\})}} 
\le \max_{C\subset \C} \frac{\alpha_C}{\beta_{\S(C)}} = \kappa < 1.
\]
while for $k=I$ we have $\frac{\rlambda}{\rmu}  \frac{\rlambda}{\rmu} \frac{\alpha_{\{C_1,\ldots,C_\ell\}}}{\beta_{\S(\{C_1,\ldots,C_\ell\})}} =\frac{\rlambda}{\rmu} \nearrow 1$.  It follows that most of the sum of terms needed to compute the normalizing constant $B$ is concentrated in the terms for which $k=I$.

(ii)  When $k=I$ then $n_I$ is distributed as a geometric random variable  and so the probability that $n_I$ is less than any constant $M$ 
is $1 - \left(\frac{\rlambda}{\rmu}\right)^M \searrow 0$.

(iii)  If $k=I$ and $n_i> >0$, the possible transitions from $Y^D(N)$ are: if $z^{N+1}=c_i$ it augments $n_i$.  Otherwise,  if $z^{N+1}=s_j$  it will match with one of $C_1,\ldots,C_I$, say $C_l$, and as a result 
the order of $C_1,\ldots,C_I$ and number of unmatched agents between them will change.  At worst, $C_l$ appears only once, in which case it become the new $C_I$ and $n_I$ decreases by a geometric number of terms.  We do not need the information on $n_I$ except to know it is positive enough.

(iv)  It is seen immediately that $\pi_{Y^D_0}(C_1,n_1,\ldots,C_I) \to \pi_{Y^B}(C_1,n_1,\ldots,C_I)$.  The description of the transitions in part (iii) shows also that the conditional transitions of $\pi_{Y^D_0}$ when $n_I>>0$ are the same as those of $Y^B$.
\qed \end{proof}

It follows immediately that:
\begin{corollary}
Assume CRP holds.
When $\frac{\rlambda}{\rmu} \nearrow 1$ the  matching rates of the directed FCFS bipartite matching model converge to the matching rates of the symmetric FCFS bipartite matching model, while the fraction of unassigned goods converges to 0.
\end{corollary}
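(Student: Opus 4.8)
The plan is to deduce the corollary directly from Theorem~\ref{thm.heavytraffic}. The key observation is that, by~(\ref{eqn.iid}), $\frac{\mu_{s_j}}{\rmu}=\beta_{s_j}$ for every value of $\rlambda/\rmu$, so~(\ref{eqn.matchingrates1}) may be read as
\[ r^D_{s_j,c_i} = \beta_{s_j}\, E_{\pi_Y}\!\big[\,1(s_j\to c_i)\,\big], \]
where $1(s_j\to c_i)$ is the indicator that a good of type $s_j$ arriving to the stationary state $Y^D(N)$ is routed by FCFS to an agent of type $c_i$, i.e. that the first type compatible with $s_j$ in the ordered list $(C_1,\ldots,C_k)$ is $c_i$. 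Crucially this indicator is a bounded functional that depends only on the ordered list of unmatched types, and not on the interval counts $n_1,\ldots,n_k$ (in particular not on $n_I$). The same remark applies to $1(\{C_1,\ldots,C_k\}\cap\C(s_j)=\emptyset)$ in~(\ref{eqn.matchingrates2}).

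First I would dispose of the unmatched goods. By Theorem~\ref{thm.heavytraffic}(i) the stationary mass carried by states with $k<I$ tends to $0$ as $\rlambda/\rmu\nearrow 1$, so it suffices to examine the event $\{k=I\}$. On that event every agent type appears in the list, hence for any good type $s_j$ (which has $\C(s_j)\ne\emptyset$) the set $\{C_1,\ldots,C_I\}\cap\C(s_j)$ is nonempty and $s_j$ is matched with certainty. Consequently $E_{\pi_Y}\!\big[1(\{C_1,\ldots,C_k\}\cap\C(s_j)=\emptyset)\big]\to 0$, giving $r_{s_j,\emptyset}\to 0$ and therefore $r_\emptyset=\frac{\rmu-\rlambda}{\rmu}\to 0$.

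For the matched rates I would again split $E_{\pi_Y}[1(s_j\to c_i)]$ over $\{k<I\}$ and $\{k=I\}$; the first contribution vanishes by part~(i), and on the second I condition further on $n_I>0$, an event of probability tending to $1$ by part~(ii). By parts~(iii)--(iv) the conditional law of the reduced chain $Y^D_0(N)$ converges in total variation to $\pi_{Y^B}$. Since $1(s_j\to c_i)$ is a bounded functional of the ordered type list alone, total-variation convergence yields
\[ E_{\pi_{Y^D_0}}\!\big[\,1(s_j\to c_i)\,\big]\ \longrightarrow\ E_{\pi_{Y^B}}\!\big[\,1(s_j\to c_i)\,\big]. \]
Finally I would identify the limit with the symmetric matching rate: by PASTA an arriving $s_j$ in the symmetric model sees the stationary state $Y^B$ and is matched to the first compatible type among $C_1,\ldots,C_I$, so $r^B_{s_j,c_i}=\beta_{s_j}\,E_{\pi_{Y^B}}[1(s_j\to c_i)]$. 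As the prefactor $\beta_{s_j}$ already agrees on both sides, this gives $r^D_{s_j,c_i}\to r^B_{s_j,c_i}$.

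The main point requiring care is the interchange of the limit $\rlambda/\rmu\nearrow 1$ with the infinite sum defining the matching rate, and this is exactly what Theorem~\ref{thm.heavytraffic} is built to supply: because $1(s_j\to c_i)$ is a $0/1$ functional of the type list alone, the escaping-mass statement~(i) kills the $k<I$ contribution and the total-variation statement~(iv) controls the $k=I$ contribution, so no separate uniform-integrability estimate is needed. The only subtlety is to record explicitly that FCFS routing of $s_j$ reads off only which agent types are present and in what order, so that deleting the coordinate $n_I$ (as in $Y^D_0$) leaves the functional unchanged and makes it literally the same function that defines the symmetric matching rate.
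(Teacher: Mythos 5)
Your proposal is correct and is essentially the argument the paper intends: the corollary is stated as an immediate consequence of Theorem~\ref{thm.heavytraffic}, and your write-up simply makes explicit that the matching rates are $\beta_{s_j}$ times expectations of bounded indicators of the ordered type list, so that parts (i), (ii) and (iv) of that theorem (escaping mass for $k<I$, $n_I\to\infty$, and total-variation convergence of $Y_0^D$ to $Y^B$) let you pass to the limit term by term, with the unmatched-goods claim following from $r_\emptyset=(\rmu-\rlambda)/\rmu$. The only detail worth recording is that $\C(s_j)\ne\emptyset$ for every $s_j$ is itself a consequence of CRP (take $S=\{s_j\}$ in $\beta_S<\alpha_{\C(S)}$), which justifies your claim that on $\{k=I\}$ every arriving good finds a match.
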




\section{A Simple Illustrative Example}
\label{sec.example}
We consider a simple system, with three types of agents and three types of goods and $\G=\{(s_1,c_1),(s_1,c_2),(s_2,c_1),(s_2,c_3),(s_3,c_2),(s_3,c_3)\}$, with $\alpha=(.3,.5,.2)$, $\beta=(.3,.3,.4)$, as illustrated in Fig \ref{fig.3x3system}.
\begin{figure}[htbp]
   \centering
   \includegraphics[scale=0.5]{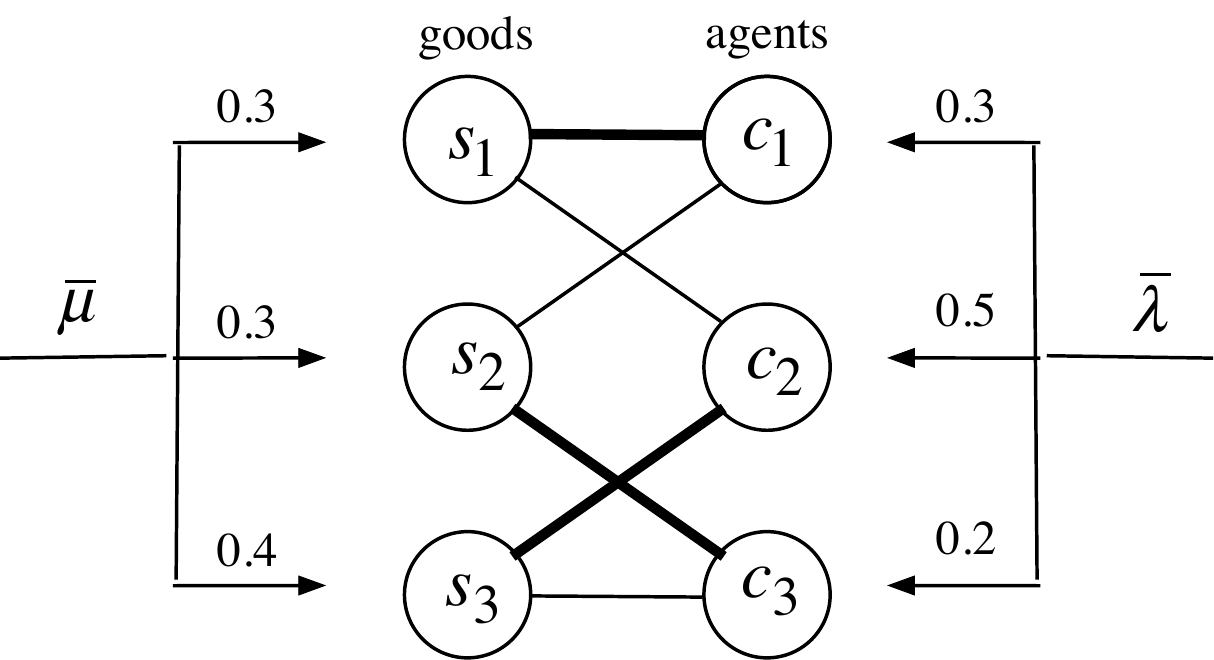} 
   \caption{A 3x3 system}
   \label{fig.3x3system}
\end{figure}
With the help of Algorithms 1 and 2 we can calculate matching rates and means and variances of delays.
For $\rmu=1$, $\rlambda=0.7$ and $\rho=\rlambda/\rmu=0.7$ we obtain:
\[
r_{s_j,c_i} = \begin{array}{l|ccc|}
& c_1 & c_2 & c_3 \\
s_1 & 0.090 & 0.139 & 0 \\
s_2 & 0.120 & 0 & 0.067 \\
s_3 & 0 & 0.211 & 0.073 
\end{array},   \qquad
r_{s_j,\emptyset} = \begin{array}{l|c|}
& \mbox{lost} \\
s_1 & 0.071  \\
s_2 & 0.113  \\
s_3 & 0.116 
\end{array},
\] 
\[
E(L_{s_j,c_i}) = \begin{array}{l|ccc|}
& c_1 & c_2 & c_3 \\
s_1 & 7.63 & 7.64 & 0 \\
s_2 & 7.14 & 0 & 6.38 \\
s_3 & 0 & 7.40 & 6.45 
\end{array},   \qquad
\sigma(L_{s_j,c_i}) = \begin{array}{l|ccc|}
& c_1 & c_2 & c_3 \\
s_1 & 6.14 & 6.30 & 0 \\
s_2 & 5.97 & 0 & 5.41 \\
s_3 & 0 & 6.21 & 5.46 
\end{array}.
\] 
From this calculation we see that the loss probabilities for the various goods may be quite different.  
We also obtain for the delays of the various type of agents:
\[
\begin{array}{lll}
E(L_{c_1}) = 7.35, & \quad E(L_{c_2}) =  7.50,  &\quad E(L_{c_3}) =  6.38,  \\
\sigma((L_{c_1})= 6.05, & \quad \sigma((L_{c_2})= 6.25, & \quad \sigma((L_{c_3})= 5.44.
\end{array}
\]
Assume now that  arrivals are Poisson, with rates $\rlambda,\rmu$, then using Corollary \ref{thm.waits} we have:
\[
\begin{array}{lll}
E(W_{c_1}) = 4.33, & \quad E(W_{c_2}) =  4.41,  &\quad E(W_{c_3}) =  3.75. \\
\sigma((W_{c_1})= 3.90, & \quad \sigma((W_{c_2})= 4.01, & \quad \sigma((W_{c_3})= 3.53.
\end{array}
\]
We can see from these results that the FCFS matching policy achieve balanced supply of goods, the expected delays for the three types of agents are very close to each other, and even the expected delays for individual $(s_j,c_i)\in \G$ are quite close.   We also see that the standard deviation of delays is close to the expectation, which indicates that the distribution of delays and waiting times is close to exponential.

By Proposition \ref{thm.lighttraffic} and Theorem \ref{thm.heavytraffic} we know  the limits of the matching rates as $\rho=\rlambda/\rmu$ approaches 0 or 1.   Fig \ref{fig.RatesPlot} shows how the matching rates for $(s_j,c_i)\in\G$ vary as $0<\rho<1$.
\begin{figure}[htbp]
   \centering
   \includegraphics[scale=0.8]{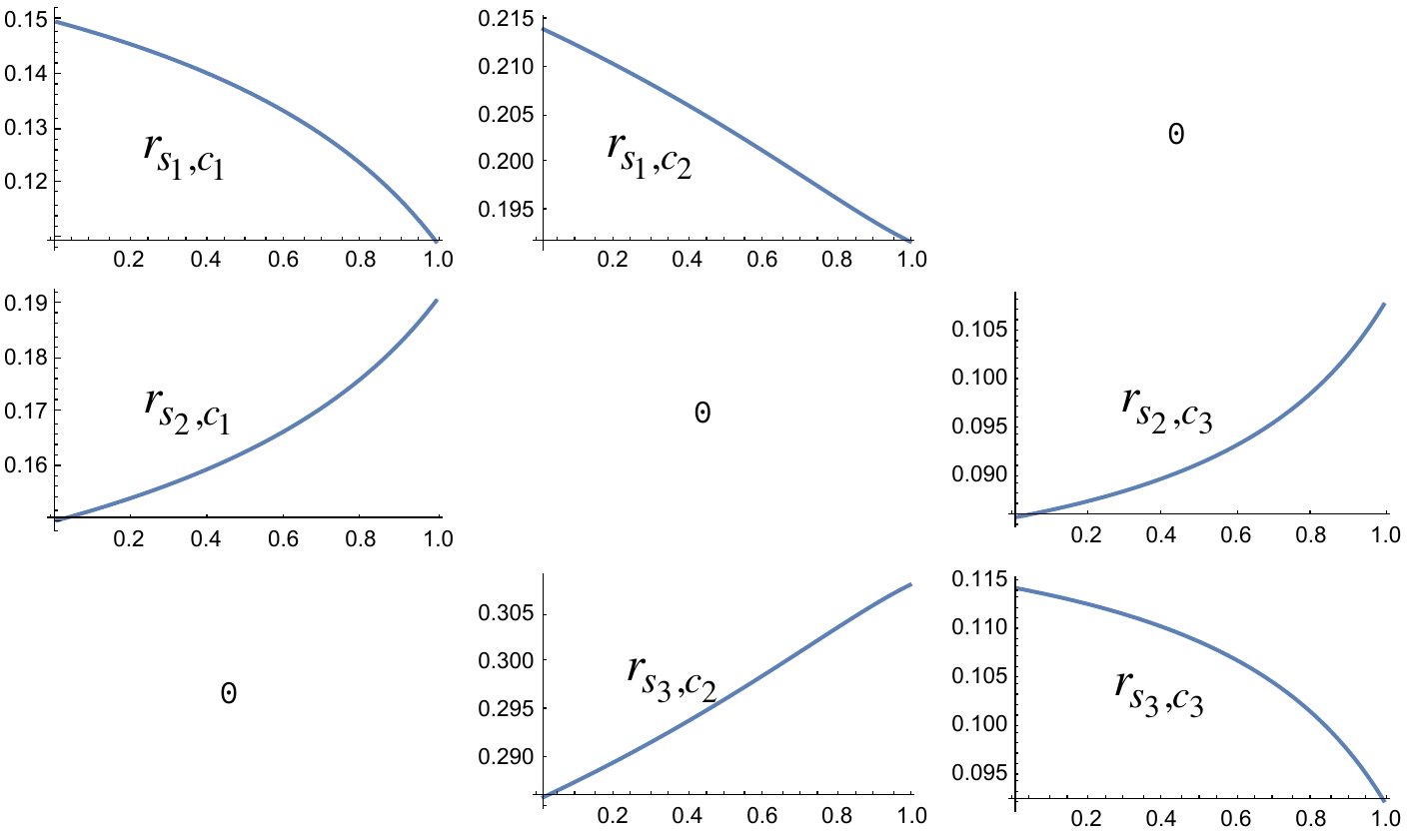} 
   \caption{Plots of the matching rates as a function of traffic intensity}
   \label{fig.RatesPlot}
\end{figure}
We see  that the variations in values of match rates are not very large.  In very light traffic each agent receives goods immediately, and is not influenced by the agents of the  other types, so the
matching rates are as if each type of agent has its own set of goods without sharing.  In heavy traffic there is CRP and sharing of all the resources of the goods.  The change over the range of values of $\rho$ is monotone.   For a possible explanation of how the rates change consider agent type $c_2$.  In light traffic it receives $\frac{3}{7}$ from $s_1$ and $\frac{4}{7}$ from $s_3$.  In heavy traffic, goods of types $s_1$ are shared with $c_1$, and goods of types $s_3$ are shared with $c_3$, but the requirements of $c_1$ are $0.3$ and those of $c_3$ are only $0.2$, so more goods of type $s_3$ then of type $s_1$ are diverted to $c_2$, and so $r_{s_1,c_2}$ decreases, and $r_{s_3,c_2}$ increases with $\rho$. 

It is perhaps worthwhile to compare the bipartite matching of this example to what would be the results of eliminating the pooling effect.  Assume each type of agent is matched with only one type of good, as follows:  $\G_1=\{(s_1,c_1),(s_2,c_3),(s_3,c_2)\}$ (these edges are marked by heavier lines in Fig \ref{fig.3x3system}).  In Fig \ref{fig.WaitPlot}  we plot the average waiting times for each type of agent, for our system under the pooled bipartite matching, compared with the waiting times for discriminating matching according to $\G_1$.  Under discriminating matching each agent-good pair behaves like an M/M/1 queue with expected waiting time $\frac{1}{\mu_{s_j}-\lambda_{c_i}}$.  We see that the pair $(s_3,c_2)$ becomes unstable for $\rho > 0.8$, the pair $(s_2,c_3)$ has discriminating traffic intensity $<\frac{2}{3}$ but pooled waiting time is still shorter than discriminating for all $\rho<0.8$.  For the pair  $(s_1,c_1)$, where  discriminating traffic intensity varies form 0 to 1, the ratio of waiting times starts at $\approx 2$ in light traffic and converges to $\approx 3.3$ as $\rho \nearrow 1$.   This is   slightly better than the ratio we expect from pooling 3 i.i.d. M/M/1 queues. 
\begin{figure}[htbp]
   \centering
   \includegraphics[scale=0.35]{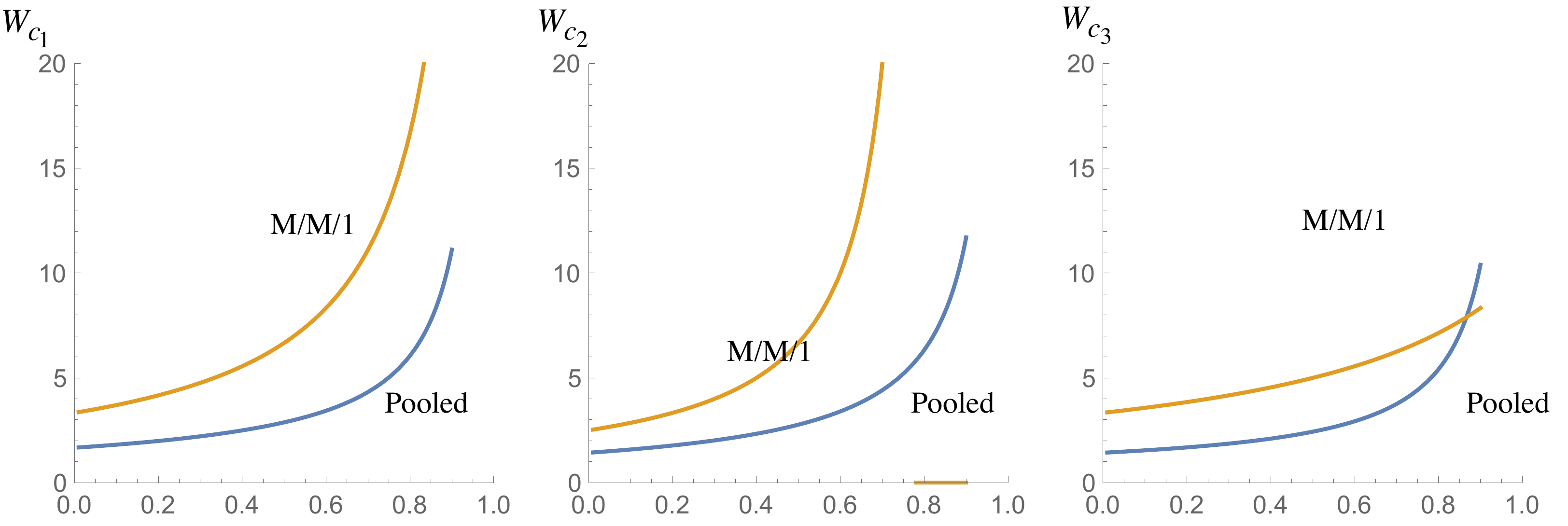} 
   \caption{Plots of the Waiting Time, for Each Agent Type, Pooled and Not-Pooled}
   \label{fig.WaitPlot}
\end{figure}

\bibliographystyle{plain}
\bibliography{DirectedFCFSBibliography}   

\end{document}